\newtheorem{theorem}{Theorem}
\newtheorem{proposition}{Proposition}
\newtheorem{lemma}{Lemma}
\newtheorem{example}{\textbf{Example}}
\def\blfootnote{\gdef\@thefnmark{}\@footnotetext}
\newcommand\myfootnote[1]{
	\renewcommand{\thefootnote}{}
	\footnotetext{#1}
	\def\thefootnote{\@arabic\c@footnote}
}
\newcommand{\rvline}{\hspace*{-\arraycolsep}\vline\hspace*{-\arraycolsep}}
\renewcommand{\subsection}{\@startsection{subsection}{2}{0mm}{-\baselineskip}{-5pt}{\it \bf}}
\title{Automorphisms and derivations of algebras of infinite matrices}
\author{Oksana Bezushchak}
\begin{document}
	
	\maketitle

\begin{abstract} We describe automorphisms and derivations of several important associative and Lie algebras of infinite matrices over a field.
\end{abstract}

\blfootnote{\keywords{\emph{Key words and phrases.}  Algebra of infinite matrices; locally matrix algebra; derivation; automorphism.}}               

\blfootnote{\subjclass{2020 \emph{Mathematics Subject Classification.} Primary 15B30; Secondary 16W20, 16W25.}}

\section{Introduction}

Let $\mathbb{F}$ be a ground field and let $A$ be an associative  $\mathbb{F}$-algebra. Recall that an associative  algebra $A$ gives rise to the Lie algebra $A^{(-)}=(A,[a,b] = ab-ba).$ For an arbitrary element $a\in A$ the operator $\text{ad}(a):x \to [a,x]$ is an (\emph{inner}) \emph{derivation} of the associative algebra $A$ and of the Lie algebra $A^{(-)}.$ If $J$ is an ideal of the algebra $A$ then $\text{ad}_J (a)$ denotes the restriction of the derivation $\text{ad}(a)$ to $J.$

An associative algebra $A$ with $1$ gives rise to the group $G(A)$ of invertible elements of $A.$  For an arbitrary invertible element $a\in G(A)$ the conjugation $x \to a^{-1}xa,$ $x\in A,$ is an (\emph{inner}) \emph{automorphism} of the algebra $A$  and, hence, of the algebra $A^{(-)}.$

Let $I$ be an infinite set and let $\mathbb{N},$ $\mathbb{Z}$ denote the set of positive integers  and the set of integers, respectively.

\begin{example}\label{Ex_IN_M_3}  Let $M_{\infty}(I,\mathbb{F})$ be the associative algebra of all $(I\times I)$-matrices over  $\mathbb{F}$ having finitely many nonzero entries $($finitary matrices$).$ The Lie algebras $$\mathfrak{gl}_{\infty}(I,\mathbb{F})  =  M_{\infty}(I,\mathbb{F})^{(-)}, \quad  \quad \mathfrak{sl}_{\infty}(I,\mathbb{F})  =  [  \mathfrak{gl}_{\infty}(I,\mathbb{F}),  \mathfrak{gl}_{\infty}(I,\mathbb{F})  ]$$ and their representation  attracted considerable attention in the literature $($see, \emph{\cite{Frenkel_Penkov_Serganova,Penkov_Serganova})}. \end{example}

A linear transformation $*:A\to A$ of  an associative algebra $A$ is called an \textit{involution} if $(a^*)^* =a,$ $(ab)^* =b^* a^*$ for  arbitrary elements $a,b\in A.$ The subspace of skew-symmetric elements $K(A,^*)=\{a\in A \, | \, a^* =-a\}$ is a subalgebra of the Lie algebra $A^{(-)}.$ The algebra $M_{\infty} (I,\mathbb{F})$ is equipped with the transpose and the symplectic involutions  that give rise to Lie algebras $\mathfrak{o}_{\infty}(I,\mathbb{F}) $ and $\mathfrak{sp}_{\infty}(I,\mathbb{F}) $ of skew-symmetric elements, respectively.

\begin{example}\label{Ex_IN_M_2} Let $M_{rcf}(I,\mathbb{F})$ be the associative algebra of all  $(I\times I)$-matrices over  $\mathbb{F}$ having finitely many nonzero entries in each row  and in each column. It is easy to see that  $M_{\infty}(I,\mathbb{F})$ is an ideal of the algebra $M_{rcf}(I,\mathbb{F}).$ The algebra $M_{rcf}(I,\mathbb{F})$ gives rise to the Lie algebra $$\mathfrak{gl}_{rcf}(I,\mathbb{F})=M_{rcf}(I,\mathbb{F})^{(-)}$$ and to the group $GL_{rcf}(I,\mathbb{F})=G(M_{rcf}(I,\mathbb{F}).$

The transpose and the symplectic involutions on  $M_{\infty}(I,\mathbb{F})$ naturally and uniquely extend to involutions of $M_{rcf}(I,\mathbb{F}).$ \end{example}

\begin{example}\label{Ex_IN_M_1} Let $M(I,\mathbb{F})$ denote the associative algebra of $(I\times I)$-matrices over the field $\mathbb{F}$ having finitely many nonzero entries in each column. If $V$ is a vector space over  $\mathbb{F}$ of dimension $|I|$ then the algebra $\text{End}_{\mathbb{F}}(V)$ of all linear transformations of $V$ is isomorphic to $M(I,\mathbb{F}).$

Let $\mathfrak{gl}(I,\mathbb{F})=M(I,\mathbb{F})^{(-)}.$ The group $G(M(I,\mathbb{F}))$ is isomorphic to the group $GL(V)$ of all invertible linear transformations of $V.$ \end{example}

\begin{example}\label{Ex_IN_M_4} Let $MJ(\mathbb{F})$ be the algebra of Jacobi matrices, that is, $(\mathbb{Z}\times \mathbb{Z})$-matrices having finitely many nonzero diagonals. In other worlds, a matrix $a=(a_{ij})_{i,j\in \mathbb{Z}}$	lies in $MJ(\mathbb{F})$ if there exists $n \in \mathbb{N}$ such that $a_{ij}=0$ whenever $|i-j|>n.$
	
The Lie algebras  $\mathfrak{gl}_J(\mathbb{F})=MJ(\mathbb{F})^{(-)}$ found applications  in the theory of solitones \emph{(\cite{Date_Jimbo_Kashiwara_Miwa,Feigin_Tsygan})}. \end{example}

\begin{example}\label{Ex_IN_M_5} Let  $\text{End}_{fin}(V)$ be the subalgebra of $\text{End}_{\mathbb{F}}(V)$ that consists of all linear transformations of finite range. \end{example}

N.~Jacobson (\cite{Jacobson}, Chap. 9, Sec. 11, Th. 7) showed that for an arbitrary associative algebra $A,$  $M_{\infty}(I,\mathbb{F}) \subseteq A \subseteq M(I,\mathbb{F}),$ every automorphism  $\varphi$ of $A$ is a conjugation by an invertible element from $M(I,\mathbb{F}),$ $\varphi(a)= x^{-1}ax,$ $a\in A.$

K.-H.~Neeb \cite{Neeb} and N.~Stumme \cite{Stumme} described derivations and automorphisms of Lie algebras $\mathfrak{sl}_{\infty}(I,\mathbb{F}), $  $\mathfrak{o}_{\infty}(I,\mathbb{F}), $ $\mathfrak{sp}_{\infty}(I,\mathbb{F}) $ over a field of zero characteristic.

\section{Main results}

Theorems \ref{Th_IN_M_1_NEW} and  \ref{Th_IN_M_3_NEW} below describe all derivations   of the associative $\mathbb{F}$-algebras $M_{\infty}(I,\mathbb{F}),$ $\text{End}_{fin}(V),$ $M_{rcf}(I,\mathbb{F}),$ $MJ(\mathbb{F}),$ $M(I,\mathbb{F})$ (Theorem \ref{Th_IN_M_1_NEW})  and all  automorphisms of the associative $\mathbb{F}$-algebras $M_{\infty}(I,\mathbb{F}),$ $\text{End}_{fin}(V),$ $M_{rcf}(I,\mathbb{F}),$ $M(I,\mathbb{F})$ (Theorem \ref{Th_IN_M_3_NEW}). Theorem \ref{Th_IN_M_4_NEW} describes anti-automorphisms of the algebras $M_{\infty}(I,\mathbb{F}),$ $M_{rcf}(I,\mathbb{F}),$ $MJ(\mathbb{F})$ and shows that the algebras  $\text{End}_{fin}(V),$ $M(I,\mathbb{F})$ do not have any  anti-automorphisms.

Finally, using the proof of Herstein's conjectures  \cite{Herstein} by K.I.~Beidar, M.~Bre\v{s}ar, M.~Chebotar and W.S.~Martindale (see, \cite{Bei_Bre_Cheb_Mart_1,Bei_Bre_Cheb_Mart_2,Bei_Bre_Cheb_Mart_3}) we describe derivations of Lie algebras $\mathfrak{sl}_{\infty}(I,\mathbb{F}), $  $\mathfrak{o}_{\infty}(I,\mathbb{F}), $ $\mathfrak{sp}_{\infty}(I,\mathbb{F}) ,$ $\mathfrak{gl}_{rcf}(I,\mathbb{F}), $  $\mathfrak{gl}_{J}(\mathbb{F}) $  in Theorem \ref{Th_IN_M_2_NEW} and automorphisms of Lie algebras $\mathfrak{sl}_{\infty}(I,\mathbb{F}), $  $\mathfrak{o}_{\infty}(I,\mathbb{F}), $ $\mathfrak{sp}_{\infty}(I,\mathbb{F}) ,$ $\mathfrak{gl}_{rcf}(I,\mathbb{F}), $  $\text{End}_{fin}(V)^{(-)},$ $\mathfrak{gl}(I,\mathbb{F})$ in Theorem \ref{Th_IN_M_5_NEW}.

When dealing with Lie algebras of  infinite matrices we use the following theorem which is of some independent interest.

\begin{theorem}\label{Pr_1_IN_M} For an arbitrary infinite set $I$ we have:
\begin{enumerate}
  \item[\emph{(a)}] $M(I,\mathbb{F})=[M(I,\mathbb{F}),M(I,\mathbb{F})],$
  \item[\emph{(b)}] $M_{rcf}(I,\mathbb{F})=[M_{rcf}(I,\mathbb{F}),M_{rcf}(I,\mathbb{F})],$
  \item[\emph{(c)}] $MJ(\mathbb{F})=[MJ(\mathbb{F}),MJ(\mathbb{F})],$
  \item[\emph{(d)}] $\mathfrak{o}_{\infty}(I,\mathbb{F})=[\mathfrak{o}_{\infty}(I,\mathbb{F}),\mathfrak{o}_{\infty}(I,\mathbb{F})],$ \\ $\mathfrak{sp}_{\infty}(I,\mathbb{F})=[\mathfrak{sp}_{\infty}(I,\mathbb{F}),\mathfrak{sp}_{\infty}(I,\mathbb{F})].$
\end{enumerate}
\end{theorem}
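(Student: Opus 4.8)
The plan is to prove each identity by showing every matrix unit (or, for the orthogonal/symplectic cases, every standard generator of the skew-symmetric Lie algebra) lies in the commutator subalgebra, together with a "global" argument that the infinite sums defining elements of these algebras can themselves be realized as single commutators. The key elementary observation is the familiar identity $[e_{ij}, e_{jk}] = e_{ik}$ for $i \neq k$ and $[e_{ij}, e_{ji}] = e_{ii} - e_{jj}$; since $I$ is infinite, for any pair $(i,i)$ we may pick $j \neq i$ and write $e_{ii}$ as a sum of things of the form $[e_{ij},e_{jk}]$ plus a diagonal commutator, so already the finitary algebra satisfies $M_\infty(I,\mathbb F) = [M_\infty(I,\mathbb F), M_\infty(I,\mathbb F)]$. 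This handles the "local" content of (a)–(c).

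For parts (a), (b), (c) the real work is to pass from finite sums of commutators to the infinite matrices actually living in $M(I,\mathbb F)$, $M_{rcf}(I,\mathbb F)$, $MJ(\mathbb F)$. First I would split an arbitrary $a$ in one of these algebras as $a = d + u + \ell$, where $d$ is the diagonal part, $u$ the strictly-upper part and $\ell$ the strictly-lower part relative to a chosen linear order on $I$ (for $MJ(\mathbb F)$ the natural order on $\mathbb Z$; for $M(I,\mathbb F)$ and $M_{rcf}(I,\mathbb F)$ any well-ordering, or better a partition of $I$ into blocks so that each column has finite support inside controllably many blocks). The point is that a strictly triangular matrix $u$ in any of these three algebras can be written as $[h, u']$ where $h$ is a suitable diagonal matrix with distinct-enough entries and $u'$ is obtained from $u$ by rescaling entries — here one must check that $u'$ stays in the same algebra (it does: scaling entries by nonzero scalars preserves the support conditions). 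Handling $d$ is the subtle point: a diagonal matrix need not be a commutator of diagonal matrices, but using off-diagonal units as above one writes each $d_{ii}e_{ii}$ as a commutator; to sum infinitely many of these into a single commutator I would group the indices of $I$ into infinitely many disjoint pairs (or, for $MJ(\mathbb F)$, use the shift structure of $\mathbb Z$) and assemble a single pair of matrices $x, y$ in the algebra with $[x,y] = d$ — the bandwidth/finite-support bookkeeping is exactly what must be verified to stay within $MJ(\mathbb F)$ respectively $M_{rcf}(I,\mathbb F)$ respectively $M(I,\mathbb F)$.

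For part (d) I would use the explicit description of $\mathfrak o_\infty(I,\mathbb F)$ and $\mathfrak{sp}_\infty(I,\mathbb F)$ as spans of the standard skew generators $f_{ij} = e_{ij} - e_{ji}$ (orthogonal case) and the analogous symplectic generators $e_{ij}\pm e_{\bar j\,\bar i}$, $e_{i\bar j}+e_{j\bar i}$, etc., indexed by an infinite set, and compute brackets of two such generators: e.g. $[f_{ij}, f_{jk}] = f_{ik}$ for distinct $i,j,k$, which already shows each $f_{ik}$ with $i \neq k$ is a commutator, and $[f_{ij}, f_{jk}] + [f_{ik},f_{kj}]$-type combinations recover the diagonal-type elements $h_{ij}$ spanning a Cartan subalgebra; since $I$ is infinite there are always enough auxiliary indices to carry out these three-index manipulations. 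Because every element of $\mathfrak o_\infty$ or $\mathfrak{sp}_\infty$ is already a finite linear combination of generators (these are finitary algebras, unlike in (a)–(c)), no infinite-summation issue arises and (d) reduces to this finite bracket computation.

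The main obstacle I anticipate is part (a)–(c)'s diagonal component: rewriting an arbitrary infinite diagonal matrix of the appropriate class as a single honest commutator inside the same class, rather than as an infinite sum of commutators, while respecting the finite-support-in-columns (and in rows, and band) constraints. I expect this to require a careful choice of auxiliary "index-shuffling" matrix — for $MJ(\mathbb F)$ the bilateral shift and its powers; for $M_{rcf}$ and $M$ a block-permutation tailored to the given matrix's support — and verifying closure of the algebra under the resulting products is where the bulk of the routine-but-delicate checking lies.
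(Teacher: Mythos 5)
There is a genuine gap, and in fact the opening claim of your plan is false. You assert that the identity $[e_{ij},e_{jk}]=e_{ik}$ together with $[e_{ij},e_{ji}]=e_{ii}-e_{jj}$ already gives $M_{\infty}(I,\mathbb{F})=[M_{\infty}(I,\mathbb{F}),M_{\infty}(I,\mathbb{F})]$. It does not: the trace is a well-defined nonzero linear functional on the finitary algebra $M_{\infty}(I,\mathbb{F})$ that vanishes on every commutator, so $[M_{\infty}(I,\mathbb{F}),M_{\infty}(I,\mathbb{F})]=\mathfrak{sl}_{\infty}(I,\mathbb{F})$ is a proper subalgebra (the paper itself defines $\mathfrak{sl}_{\infty}$ this way). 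Perfectness of $M(I,\mathbb{F})$, $M_{rcf}(I,\mathbb{F})$ and $MJ(\mathbb{F})$ is an essentially non-finitary phenomenon, so there is no ``local content'' to build on; the whole burden is the global construction. Your two mechanisms for that construction also break down. For the diagonal part, pairing the indices of $I$ into disjoint pairs and seeking $x,y$ with $[x,y]=d$ cannot work: if $x,y$ are supported on disjoint $2\times 2$ blocks, then $[x,y]$ has trace zero on each block, so you can only reach diagonals with $d_{ii}+d_{jj}=0$ on every pair. One is forced to use infinite chains (shifts) and telescoping partial sums, not pairs. For the strictly triangular part, $u=[h,u']$ with $h$ diagonal requires $h_{ii}\neq h_{jj}$ for every $(i,j)$ in the support of $u$; over a finite field this is impossible once the support graph is not $|\mathbb{F}|$-colourable, which happens for general $u\in M(I,\mathbb{F})$ (infinite rows) and for $u\in M_{rcf}(I,\mathbb{F})$ with unbounded row lengths. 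So parts (a)--(c) of your plan do not go through as stated.

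The paper's proof resolves all of this with a single uniform device: write $I=\mathbb{N}\times J$ with $|J|=|I|$, regard $(I\times I)$-matrices as $(\mathbb{N}\times\mathbb{N})$-matrices of $(J\times J)$-blocks, let $E=\sum_{i}e_{i,i+1}(\mathrm{Id})$ be the block shift (which lies in $M_{rcf}(I,\mathbb{F})$), and define $\tilde a$ by partial sums along the diagonals, $\tilde a_{ij}=\sum_{k\geq 0}a_{i-1-k,\,j-k}$; a telescoping computation gives $[E,\tilde a]=a$, so every element is a single commutator, over any field, with no case split into diagonal and triangular parts. The only thing to check is that $a\mapsto\tilde a$ preserves column-finiteness, row-and-column-finiteness, and bounded bandwidth respectively, which is the routine part. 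If you want to salvage your approach, replace the disjoint pairs by this shift-and-telescope idea; that is the missing key lemma. Your part (d) is essentially correct and matches the paper, which deduces perfectness of $\mathfrak{o}_{\infty}$ and $\mathfrak{sp}_{\infty}$ from the fact that they are direct limits of (perfect) finite-dimensional simple algebras; no infinite-summation issue arises there because these algebras are finitary.
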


In particular, the algebras $\mathfrak{gl}(I,\mathbb{F}),$ $\mathfrak{gl}_{rcf}(I,\mathbb{F})$ and $\mathfrak{gl}_{J}(\mathbb{F})$ are perfect.

\begin{theorem}\label{Th_IN_M_1_NEW}  \begin{enumerate}
		\item[\emph{(a)}]  An arbitrary derivation of the algebra $M_{\infty}(I, \mathbb{F})$ is of the type  $$\emph{ad}_{M_{\infty}(I,\mathbb{F})} (a), \quad \text{where} \quad a\in M_{rcf}(I,\mathbb{F});$$
		\item[\emph{(b)}]   an arbitrary derivation of the algebra $\emph{End}_{fin}(V)$ is of the type $$\emph{ad}_{\emph{End}_{fin}(V)} (a),\quad \text{where} \quad a\in \emph{End}_{\mathbb{F}}(V);$$
		\item[\emph{(c)}] all derivations of the algebras $M_{rcf}(I, \mathbb{F}),$  $MJ(\mathbb{F}),$ $M(I, \mathbb{F})$ are inner.
\end{enumerate}		
\end{theorem}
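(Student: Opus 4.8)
The plan is to prove (a) and (b) together by one explicit construction of the implementing element, and then to deduce (c) by restricting a derivation to its socle. For (a) and (b), realize $\mathrm{End}_{fin}(V)$ inside $M(I,\mathbb{F})$ through a basis $(v_i)_{i\in I}$ of $V$, so that in both cases the algebra $A$ (either $M_{\infty}(I,\mathbb{F})$ or $\mathrm{End}_{fin}(V)$) contains and is linearly spanned by the matrix units $e_{ij}$, $i,j\in I$; fix a distinguished index $1\in I$. Since $e_{11}$ is idempotent we have $e_{11}D(e_{11})e_{11}=0$ and $D(e_{11})=D(e_{11})e_{11}+e_{11}D(e_{11})$, so the element $a_1:=[D(e_{11}),e_{11}]$ of $A$ satisfies $[a_1,e_{11}]=D(e_{11})$; replacing $D$ by $D-\mathrm{ad}_A(a_1)$ we may assume $D(e_{11})=0$. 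Then $e_{i1}=e_{i1}e_{11}$ and $e_{11}e_{i1}=0$ (for $i\neq 1$) force $D(e_{i1})$ to be supported in column $1$ with zero entry in row $1$, and symmetrically $D(e_{1j})$ is supported in row $1$ with zero entry in column $1$.

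Now define $a$ by declaring column $i$ of $a$ to be column $1$ of $D(e_{i1})$ for $i\neq 1$, row $j$ of $a$ to be $-1$ times row $1$ of $D(e_{1j})$ for $j\neq 1$, and row $1$ and column $1$ of $a$ to be zero; the two prescriptions agree on the entries $(k,l)$ with $k,l\neq 1$ because applying $D$ to $e_{1k}e_{l1}=\delta_{kl}e_{11}$ and using $D(e_{11})=0$ together with the support conditions yields $(D(e_{l1}))_{k1}=-(D(e_{1k}))_{1l}$. One then checks directly that $[a,e_{11}]=0$, $[a,e_{i1}]=D(e_{i1})$ and $[a,e_{1j}]=D(e_{1j})$, whence $[a,e_{ij}]=[a,e_{i1}]e_{1j}+e_{i1}[a,e_{1j}]=D(e_{i1})e_{1j}+e_{i1}D(e_{1j})=D(e_{ij})$ for all $i,j$, so $D=\mathrm{ad}_A(a)$, and hence $D=\mathrm{ad}_A(a+a_1)$ for the original $D$. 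Where $a+a_1$ lives is then read off from the support data: each column of $a$ is a column of some $D(e_{i1})\in A$, hence finitely supported, so $a+a_1\in M(I,\mathbb{F})$, which is (b); and when $A=M_{\infty}(I,\mathbb{F})$ the matrices $D(e_{1j})$ are finitary, so the rows of $a$ are finite as well and $a+a_1\in M_{rcf}(I,\mathbb{F})$, which is (a).

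For (c), use that $M_{\infty}(I,\mathbb{F})$, $M_{\infty}(\mathbb{Z},\mathbb{F})$ and $\mathrm{End}_{fin}(V)$ are the socles of $M_{rcf}(I,\mathbb{F})$, $MJ(\mathbb{F})$ and $M(I,\mathbb{F})$, and that the socle of a semiprime algebra is invariant under every derivation. Thus a derivation $D$ of one of these larger algebras restricts to a derivation of its socle $S$, which by (a)/(b) equals $\mathrm{ad}(a)$ for some $a$ in $M_{rcf}(I,\mathbb{F})$, $M_{rcf}(\mathbb{Z},\mathbb{F})$, or $\mathrm{End}_{\mathbb{F}}(V)=M(I,\mathbb{F})$, respectively. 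The map $D-\mathrm{ad}(a)$ obeys the Leibniz rule and vanishes on the ideal $S$, so $0=(D-\mathrm{ad}(a))(xs)=(D-\mathrm{ad}(a))(x)\,s$ for all $x$ in the algebra and all $s\in S$; since $S$ contains the matrix units $e_{jj}$ and any element $c$ with $c\,e_{jj}=0$ for all $j$ is zero, this gives $D=\mathrm{ad}(a)$. For $M_{rcf}(I,\mathbb{F})$ and $M(I,\mathbb{F})$ the element $a$ already lies in the algebra, so $D$ is inner. For $MJ(\mathbb{F})$ one has only $a\in M_{rcf}(\mathbb{Z},\mathbb{F})$ so far; but applying $D=\mathrm{ad}(a)$ to the diagonal matrix $b=\sum_{k\in\mathbb{Z}}k\,e_{kk}\in MJ(\mathbb{F})$ and using that $[a,b]_{kl}=(l-k)a_{kl}$ must be a band matrix forces $a_{kl}=0$ for $|k-l|$ large, i.e. $a\in MJ(\mathbb{F})$, so again $D$ is inner.

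The computations are routine bookkeeping with supports of matrices; the real point — and the only thing that distinguishes the three conclusions of the theorem — is pinning down exactly which ambient algebra the implementing element lies in: finitariness of $D(e_{i1})$ and $D(e_{1j})$ gives $a\in M_{rcf}(I,\mathbb{F})$ for $M_{\infty}(I,\mathbb{F})$, whereas for $\mathrm{End}_{fin}(V)$ only the columns of $a$ are controlled so $a$ may fail to be row-finite, and for $MJ(\mathbb{F})$ the diagonal-matrix argument is what cuts $a$ back down to a band matrix. A secondary point to flag is the derivation-invariance of the socle, which underpins the reduction for (c) and should be stated explicitly even though it is standard for semiprime algebras.
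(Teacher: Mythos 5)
Your construction of the implementing matrix $a$ from the Peirce data $D(e_{i1})$, $D(e_{1j})$ is correct and gives a nice elementary alternative to the paper's route (which instead uses density of inner derivations for locally matrix algebras in the Tykhonoff topology plus a gluing over finite subsets), and your socle-based reduction for (c) is also a legitimate variant of the paper's argument. However, there are two genuine gaps. First, your opening claim that $\mathrm{End}_{fin}(V)$ ``is linearly spanned by the matrix units $e_{ij}$'' is false: in a basis, $\mathrm{End}_{fin}(V)$ consists of column-finite matrices with finitely many nonzero rows, and a single row may contain infinitely many nonzero entries (take $v\mapsto f(v)v_1$ for a functional $f$ not supported on finitely many basis vectors). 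The span of the matrix units is only $M_{\infty}(I,\mathbb{F})\subsetneq \mathrm{End}_{fin}(V)$, so your computation proves $D=\mathrm{ad}(a)$ only on $M_{\infty}(I,\mathbb{F})$ and part (b) is incomplete as written. The fix is exactly the sandwiching step you already use in (c) (and which the paper uses here): for $x\in\mathrm{End}_{fin}(V)$ one has $e_{ii}\bigl(D-\mathrm{ad}(a)\bigr)(x)e_{jj}=\bigl(D-\mathrm{ad}(a)\bigr)(e_{ii}xe_{jj})=0$ because $e_{ii}xe_{jj}\in M_{\infty}(I,\mathbb{F})$, whence all entries of $\bigl(D-\mathrm{ad}(a)\bigr)(x)$ vanish. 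You need to say this; the spanning claim cannot carry the weight.

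Second, your argument that $a\in MJ(\mathbb{F})$ in part (c) fails in positive characteristic. From $[a,b]_{kl}=(l-k)a_{kl}$ with $b=\sum_k k\,e_{kk}$ you can only conclude $a_{kl}=0$ when $l-k$ is invertible in $\mathbb{F}$; if $\mathrm{char}\,\mathbb{F}=p$ and $p\mid(l-k)$, the factor $(l-k)$ is zero in $\mathbb{F}$ and the entry $a_{kl}$ is unconstrained. For instance a row- and column-finite matrix supported on positions $(i,\,i+p f(i))$ with $f$ unbounded commutes with $b$ yet lies outside $MJ(\mathbb{F})$. Since Theorem 1 carries no hypothesis on the characteristic, this step must be replaced; the paper's device is to test against the shift $E_1=\sum_{i\in\mathbb{Z}}e_{i,i+1}\in MJ(\mathbb{F})$: if $[a,E_1]\in MJ(\mathbb{F})$ then $a_{i,j}=a_{i+1,j+1}$ for $|i-j|$ large, so any far-off diagonal of $a$ carrying a nonzero entry is constantly nonzero, contradicting row-finiteness of $a\in M_{rcf}(\mathbb{Z},\mathbb{F})$ unless all but finitely many diagonals vanish. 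With these two repairs (and an explicit verification that the socles are what you claim and that the socle of a semiprime algebra is derivation-invariant, both of which are true and easy), your proof goes through.
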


\begin{theorem}\label{Th_IN_M_2_NEW} Let $\mathbb{F}$ be a field of the characteristic not equal to $2.$ \begin{enumerate}
		\item[\emph{(a)}]  An arbitrary derivation of the Lie algebra $\mathfrak{sl}_{\infty}(I,\mathbb{F}) $ is of the type $$\emph{ad}_{\mathfrak{sl}_{\infty}(I,\mathbb{F}) } (a),\quad \text{where} \quad a\in \mathfrak{gl}_{rcf}(I,\mathbb{F});$$
			\item[\emph{(b)}]  an \ arbitrary \ derivation \ of \ the \ Lie \ algebra \ $\mathfrak{o}_{\infty}(I,\mathbb{F}) $ \ $($resp. \\ $\mathfrak{sp}_{\infty}(I,\mathbb{F}) )$ is of the type $$\emph{ad}_{\mathfrak{o}_{\infty}(I,\mathbb{F}) } (a) \quad (\text{resp.} \quad \emph{ad}_{\mathfrak{sp}_{\infty}(I,\mathbb{F}) } (a)),$$ where $ a\in K(M_{rcf}(I,\mathbb{F}),t)$ $($resp. $K(M_{rcf}(I,\mathbb{F}),s);$
				\item[\emph{(c)}]  all derivations of  Lie algebras $\mathfrak{gl}(I,\mathbb{F}) ,$ $\mathfrak{gl}_{rcf}(I,\mathbb{F}) ,$ $\mathfrak{gl}_{J}(\mathbb{F}) $ are inner.
	\end{enumerate}
\end{theorem}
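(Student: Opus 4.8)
The uniform idea for all three parts is to pass from a Lie derivation to an associative derivation and then quote Theorem~\ref{Th_IN_M_1_NEW}. Each Lie algebra occurring here is of the form $A^{(-)}$, $[A,A]$ or $K(A,{}^{*})$ for a suitable associative $\mathbb{F}$-algebra $A$: namely $\mathfrak{gl}(I,\mathbb{F})=M(I,\mathbb{F})^{(-)}$, $\mathfrak{gl}_{rcf}(I,\mathbb{F})=M_{rcf}(I,\mathbb{F})^{(-)}$, $\mathfrak{gl}_{J}(\mathbb{F})=MJ(\mathbb{F})^{(-)}$, $\mathfrak{sl}_{\infty}(I,\mathbb{F})=[M_{\infty}(I,\mathbb{F}),M_{\infty}(I,\mathbb{F})]$, and $\mathfrak{o}_{\infty}(I,\mathbb{F})=K(M_{\infty}(I,\mathbb{F}),t)$, $\mathfrak{sp}_{\infty}(I,\mathbb{F})=K(M_{\infty}(I,\mathbb{F}),s)$. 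Every one of these associative algebras contains an abundance of rank-one idempotents; primeness is verified by the direct matrix-unit computation $a\,M_{\infty}(J,\mathbb{F})\,b\neq0$ for $a,b\neq0$, and none of them is a PI algebra or satisfies a nontrivial generalized polynomial identity. Hence the functional-identity results on which the solution of Herstein's conjectures by Beidar, Bre\v{s}ar, Chebotar and Martindale \cite{Bei_Bre_Cheb_Mart_1,Bei_Bre_Cheb_Mart_2,Bei_Bre_Cheb_Mart_3} apply. I also record that, by Theorem~\ref{Pr_1_IN_M} (and the remark following it), all the Lie algebras in question are \emph{perfect}, $[L,L]=L$; for $\mathfrak{sl}_{\infty}(I,\mathbb{F})$ this is in any case classical. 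As a first application: if $A$ is one of $M(I,\mathbb{F})$, $M_{rcf}(I,\mathbb{F})$, $MJ(\mathbb{F})$ and $\delta$ is a derivation of $A^{(-)}$, then since $A$ is prime and $\operatorname{char}\mathbb{F}\neq2$ the Herstein-type description of Lie derivations of a prime ring gives $\delta=d+\gamma$, where $d$ is an associative derivation of $A$ and $\gamma\colon A\to C(A)$ is $\mathbb{F}$-linear with $\gamma([A,A])=0$; as $[A,A]=A$ by Theorem~\ref{Pr_1_IN_M}, we get $\gamma=0$, so $\delta=d$ is inner by Theorem~\ref{Th_IN_M_1_NEW}(c). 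This proves (c).

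For (a), put $R=M_{\infty}(I,\mathbb{F})$, a simple (hence prime) algebra, and let $\delta$ be a derivation of $\mathfrak{sl}_{\infty}(I,\mathbb{F})=[R,R]$. The corresponding result for derivations of the Lie ideal $[R,R]$ yields $\delta=d|_{[R,R]}+\gamma$, where $d$ is a derivation of $R$ with values a priori in the symmetric Martindale ring of quotients $Q_{s}(R)$ and $\gamma$ is central-valued with $\gamma([[R,R],[R,R]])=0$; perfectness of $\mathfrak{sl}_{\infty}(I,\mathbb{F})$ forces $\gamma=0$. One now uses the identification $Q_{s}(M_{\infty}(I,\mathbb{F}))=M_{rcf}(I,\mathbb{F})$: the row- and column-finite matrices are precisely the two-sided multipliers of the finitary matrices. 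Thus $d$ is a derivation $R\to M_{rcf}(I,\mathbb{F})$; since $R$ is generated by its idempotents and is an ideal of $M_{rcf}(I,\mathbb{F})$, the argument of Theorem~\ref{Th_IN_M_1_NEW}(a) shows $d=\operatorname{ad}(a)$ for some $a\in M_{rcf}(I,\mathbb{F})=\mathfrak{gl}_{rcf}(I,\mathbb{F})$, whence $\delta=\operatorname{ad}_{\mathfrak{sl}_{\infty}(I,\mathbb{F})}(a)$.

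For (b), $R=M_{\infty}(I,\mathbb{F})$ is ${}^{*}$-simple for ${}^{*}\in\{t,s\}$ and $\operatorname{char}\mathbb{F}\neq2$. The Herstein-conjecture description of derivations of $K(R,{}^{*})$ shows that $\delta$ is the restriction of an associative derivation $d$ of $R$ plus a central-valued map killing $[K(R,{}^{*}),K(R,{}^{*})]=K(R,{}^{*})$ (perfectness, Theorem~\ref{Pr_1_IN_M}(d)), so $\delta=d|_{K(R,{}^{*})}$; as in (a), $d=\operatorname{ad}(a)$ for some $a\in M_{rcf}(I,\mathbb{F})$. Write $a=a_{+}+a_{-}$ with $a_{+}$ ${}^{*}$-symmetric and $a_{-}$ ${}^{*}$-skew (possible since $\operatorname{char}\mathbb{F}\neq2$). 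For $x\in K(R,{}^{*})$ the element $[a_{+},x]$ is ${}^{*}$-symmetric while $[a_{-},x]$ and $\delta(x)=[a,x]$ are ${}^{*}$-skew, so $[a_{+},x]=0$; hence $a_{+}$ centralizes $K(R,{}^{*})$ in $M_{rcf}(I,\mathbb{F})$, which a short matrix-unit computation shows forces $a_{+}$ to be a scalar. Since scalars are central, replacing $a$ by $a_{-}\in K(M_{rcf}(I,\mathbb{F}),{}^{*})$ changes nothing, and $\delta=\operatorname{ad}_{K(R,{}^{*})}(a_{-})$ with $a_{-}\in K(M_{rcf}(I,\mathbb{F}),t)$ (resp.\ $K(M_{rcf}(I,\mathbb{F}),s)$).

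The main obstacle is the first reduction step in (a) and (b): invoking the functional-identity solution of Herstein's conjectures in the present (generally non-unital, infinite-dimensional) setting, i.e.\ checking the primeness/non-degeneracy hypotheses and controlling the a priori appearance of the Martindale ring of quotients when the Lie algebra is $[R,R]$ or $K(R,{}^{*})$ rather than $R^{(-)}$; one must then pin down that quotient ring as exactly $M_{rcf}(I,\mathbb{F})$ and verify that the resulting associative derivation of $M_{\infty}(I,\mathbb{F})$ valued in $M_{rcf}(I,\mathbb{F})$ is inner over $M_{rcf}(I,\mathbb{F})$. By contrast, the elimination of the central summand (supplied by the perfectness statements of Theorem~\ref{Pr_1_IN_M}) and the symmetric/skew splitting in (b) are routine.
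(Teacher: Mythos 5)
Your strategy for parts (a) and (b) is essentially the paper's: reduce a Lie derivation of $[A,A]$ or $K(A,*)$ to an associative derivation via the Beidar--Bre\v{s}ar--Chebotar--Martindale solution of Herstein's conjectures, kill the central summand using perfectness (Theorem \ref{Pr_1_IN_M}), invoke the description of associative derivations of $M_{\infty}(I,\mathbb{F})$, and in (b) split $a$ into its symmetric and skew parts. The packaging differs in one respect: the paper quotes Corollaries 1.4(b) and 1.9(b) of \cite{Bei_Bre_Cheb_Mart_3} for the \emph{simple} algebra $A=M_{\infty}(I,\mathbb{F})$ with zero center, which already deliver a derivation $\tilde d\colon A\to A$; the target $M_{rcf}(I,\mathbb{F})$ then enters only through Theorem \ref{Th_IN_M_1_NEW}(a), i.e.\ through Proposition \ref{Th_IN_M_1(1)} and Lemma \ref{Lem2_IN_M}. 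Your detour through the identification $Q_{s}(M_{\infty}(I,\mathbb{F}))=M_{rcf}(I,\mathbb{F})$ is correct but unnecessary, and it is the step you yourself flag as the main obstacle; the paper's formulation avoids it entirely.

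The genuine divergence is in (c). You apply the Lie-derivations-of-prime-rings theorem uniformly to $M(I,\mathbb{F})$, $M_{rcf}(I,\mathbb{F})$ and $MJ(\mathbb{F})$. The paper does this only for $M(I,\mathbb{F})$; for $\mathfrak{gl}_{rcf}(I,\mathbb{F})$ and $\mathfrak{gl}_{J}(\mathbb{F})$ it instead restricts the derivation to the differentially invariant ideal $\mathfrak{sl}_{\infty}$, applies part (a) there, and shows that the difference $d-\operatorname{ad}(a)$ maps the whole algebra into the centralizer of $\mathfrak{sl}_{\infty}$ (the scalars) and hence vanishes by perfectness; for $\mathfrak{gl}_{J}(\mathbb{F})$ there is the additional step of pulling $a$ from $M_{rcf}(\mathbb{Z},\mathbb{F})$ back into $MJ(\mathbb{F})$ using the shift matrix $E_{1}$. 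Your version is shorter and more uniform, but it leans harder on the black box: one must check that the prime (not merely simple) version of the theorem applies to $M_{rcf}(I,\mathbb{F})$ and $MJ(\mathbb{F})$ and that the associative derivation it produces lands in the algebra itself rather than in a larger central closure (true here, since these algebras are unital with extended centroid $\mathbb{F}$, but it needs saying). Finally, your aside that none of these algebras satisfies a nontrivial generalized polynomial identity is false: $e_{11}M_{\infty}(I,\mathbb{F})e_{11}$ is one-dimensional, so $M_{\infty}(I,\mathbb{F})$ is a GPI-algebra. This is harmless, because the hypothesis actually used in \cite{Bei_Bre_Cheb_Mart_3} is infinite-dimensionality over the center, which does hold, but the remark as written should be deleted.
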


\begin{theorem}\label{Th_IN_M_3_NEW} \begin{enumerate}
	\item[\emph{(a)}]  An \ arbitrary \ automorphism \ $\varphi$ \ of \ the \ algebra \\ $M_{\infty}(I, \mathbb{F})$ is a conjugation by an element from $GL_{rcf}(I, \mathbb{F}).$ In other words, there exists an element $x\in GL_{rcf}(I, \mathbb{F})$ such that $\varphi(a)=x^{-1}ax$ for all $a\in M_{\infty}(I, \mathbb{F});$
	\item[\emph{(b)}]   an arbitrary automorphism of the algebra $\emph{End}_{fin}(V)$ is a conjugation by an element from $GL(V);$
	\item[\emph{(c)}] all automorphisms of the algebras $M_{rcf}(I, \mathbb{F}),$   $M(I, \mathbb{F})$ are inner.
\end{enumerate} \end{theorem}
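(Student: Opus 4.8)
The plan is to take as the starting point N.~Jacobson's theorem quoted in the Introduction, which already supplies the conceptually hard part. After fixing a basis of $V$ we identify $\text{End}_{\mathbb{F}}(V)$ with $M(I,\mathbb{F})$, so each algebra in question satisfies $M_\infty(I,\mathbb{F})\subseteq A\subseteq M(I,\mathbb{F})$, where $A$ is one of $M_\infty(I,\mathbb{F})$, $\text{End}_{fin}(V)$, $M_{rcf}(I,\mathbb{F})$, $M(I,\mathbb{F})$. Hence every automorphism $\varphi$ of $A$ has the form $\varphi(a)=x^{-1}ax$ for an element $x\in M(I,\mathbb{F})$ that is invertible in $M(I,\mathbb{F})$; in particular both $x$ and $x^{-1}$ are column-finite matrices. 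What remains is to pin $x$ down inside the correct subgroup for each $A$, together with the (routine) converse check that the conjugations in question really preserve $A$.

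For (a) I would test $\varphi$ on the diagonal matrix units: the $(k,l)$-entry of $\varphi(e_{jj})=x^{-1}e_{jj}x$ equals $(x^{-1})_{kj}\,x_{jl}$. Since the $j$-th column of the invertible matrix $x^{-1}$ is nonzero, the condition $\varphi(e_{jj})\in M_\infty(I,\mathbb{F})$ forces the $j$-th row of $x$ to have finite support; letting $j$ range over $I$ shows that $x$ is row-finite, so $x\in M_{rcf}(I,\mathbb{F})$. Running the same computation for $\varphi^{-1}$, whose action is $a\mapsto xax^{-1}$, gives $x^{-1}\in M_{rcf}(I,\mathbb{F})$ as well, so $x\in GL_{rcf}(I,\mathbb{F})$. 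Conversely, a short check on row- and column-supports shows that conjugation by any element of $GL_{rcf}(I,\mathbb{F})$ maps $M_\infty(I,\mathbb{F})$ onto itself, so every such conjugation is an automorphism. For (b), conjugation by any invertible element of $M(I,\mathbb{F})$ preserves the rank of a matrix, hence preserves $\text{End}_{fin}(V)$; therefore Jacobson's theorem already gives the statement with no further work, and likewise the case $A=M(I,\mathbb{F})$ of (c) is immediate, since conjugation by an invertible element of $M(I,\mathbb{F})$ is by definition inner.

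It remains to treat $A=M_{rcf}(I,\mathbb{F})$ in (c). I would first note that $M_\infty(I,\mathbb{F})$ is the unique minimal nonzero ideal of $M_{rcf}(I,\mathbb{F})$: any nonzero ideal $J$ contains some element $a$ with $a_{ij}\neq 0$, hence contains $a_{ij}e_{kl}=e_{ki}ae_{jl}$ and so all matrix units $e_{kl}$, i.e.\ $M_\infty(I,\mathbb{F})\subseteq J$. Consequently $M_\infty(I,\mathbb{F})$ is characteristic and $\varphi\bigl(M_\infty(I,\mathbb{F})\bigr)=M_\infty(I,\mathbb{F})$, so by part (a) the restriction of $\varphi$ to $M_\infty(I,\mathbb{F})$ is conjugation by some $y\in GL_{rcf}(I,\mathbb{F})$. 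For $a\in M_{rcf}(I,\mathbb{F})$ and $b\in M_\infty(I,\mathbb{F})$ we have $ab\in M_\infty(I,\mathbb{F})$, and expanding $\varphi(ab)=\varphi(a)\varphi(b)$ while using $\varphi(b)=y^{-1}by$ and $\varphi(ab)=y^{-1}(ab)y$ yields $\bigl(\varphi(a)-y^{-1}ay\bigr)c=0$ for every $c\in M_\infty(I,\mathbb{F})$ (since $y^{-1}M_\infty(I,\mathbb{F})y=M_\infty(I,\mathbb{F})$). As $M_\infty(I,\mathbb{F})$ has zero right annihilator in $M(I,\mathbb{F})$ — a matrix killing every $e_{jl}$ has all columns zero — we conclude $\varphi(a)=y^{-1}ay$ for all $a\in M_{rcf}(I,\mathbb{F})$, and since $y\in GL_{rcf}(I,\mathbb{F})=G(M_{rcf}(I,\mathbb{F}))$ the automorphism $\varphi$ is inner.

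The conceptual heart — that an abstract automorphism must be a conjugation at all — is entirely carried by Jacobson's theorem, so the main obstacle here is bookkeeping: tracking row- and column-finiteness carefully enough to confine both $x$ and $x^{-1}$ to $M_{rcf}(I,\mathbb{F})$, and recognizing $M_\infty(I,\mathbb{F})$ as the smallest nonzero ideal of $M_{rcf}(I,\mathbb{F})$ so that part (a) can be bootstrapped to part (c). One should also keep in mind that the conjugating element is determined only up to a nonzero scalar, because the centralizer of $M_\infty(I,\mathbb{F})$ in $M(I,\mathbb{F})$ consists of the scalar matrices; since scalars lie in every group appearing above, this ambiguity causes no difficulty.
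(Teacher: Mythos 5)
Your proof is correct, but it follows a different route from the paper's. The paper does not start from the coarse form of Jacobson's theorem quoted in the Introduction (conjugating element merely invertible in $M(I,\mathbb{F})$); instead it invokes the sharper form (Jacobson, Chap.~9, Sec.~11, Th.~7) for algebras $A$ with $\mathrm{End}_{fin}(V|W)\subseteq A\subseteq \mathrm{End}_{\mathbb{F}}(V|W)$, where $W\subseteq V^*$ is a total subspace: there the conjugating element is already guaranteed to lie in $G(\mathrm{End}_{\mathbb{F}}(V|W))$. Taking $W=\mathrm{Span}_{\mathbb{F}}(\mathcal{E}^*)$ identifies $\mathrm{End}_{\mathbb{F}}(V|W)$ with $M_{rcf}(I,\mathbb{F})$ and $\mathrm{End}_{fin}(V|W)$ with $M_{\infty}(I,\mathbb{F})$, giving (a) and the $M_{rcf}$ case of (c) at once; taking $W=V^*$ gives (b) and the $M(I,\mathbb{F})$ case of (c). So the paper's proof is essentially a two-line substitution. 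You instead start from the weaker statement and recover the confinement by hand: the matrix-unit computation forcing $x$ and $x^{-1}$ into $M_{rcf}(I,\mathbb{F})$, and the bootstrap from $M_{\infty}$ to $M_{rcf}$ via the observation that $M_{\infty}(I,\mathbb{F})$ is the unique minimal ideal (hence characteristic) together with the faithfulness of right multiplication by $M_{\infty}(I,\mathbb{F})$. Both arguments are sound; yours is longer but more self-contained, needing only the version of Jacobson's theorem with $W=V^*$, while the paper's buys brevity at the cost of citing the refined statement with an arbitrary total subspace $W$.
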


Given an algebra $A$ an invertible linear transformation $\varphi:A\to A$ is called an \emph{anti-automorphism} if $\varphi(ab)= \varphi(b)\varphi(a)$ for arbitrary elements $a,b\in A.$ It is easy to see that an  anti-automorphism is an isomorphism $A\to A^{op}$ with the opposite algebra $ A^{op}=(A,a*b=ba).$ The transpose transformations  $(a_{ij})^t =(a_{ji}),$ $ a_{ij} \in  \mathbb{F},$ are anti-isomorphisms of the algebras $M_{\infty}(I, \mathbb{F}),$ $M_{rcf}(I, \mathbb{F}),$ $MJ(\mathbb{F}).$

\begin{theorem}\label{Th_IN_M_4_NEW} \begin{enumerate}
		\item[\emph{(a)}] An arbitrary anti-automorphism  of one of the algebras $M_{\infty}(I, \mathbb{F}),$ $M_{rcf}(I, \mathbb{F}),$ $MJ(\mathbb{F})$ is a composition of the  transpose and an automorphism $($see, Theorem $\ref{Th_IN_M_3_NEW});$
	\item[\emph{(b)}] algebras $\emph{End}_{fin}(V)$ and $M(I, \mathbb{F})$ do not have anti-isomorphisms. In other words, these algebras are not isomorphic to their opposite algebras.
\end{enumerate} \end{theorem}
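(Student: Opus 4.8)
The plan is to reduce part (a) to Theorem \ref{Th_IN_M_3_NEW} via the transpose, and to prove part (b) by exhibiting a structural invariant of the algebra that is reversed by any anti-isomorphism. For part (a), let $A$ be one of $M_{\infty}(I,\mathbb{F})$, $M_{rcf}(I,\mathbb{F})$, $MJ(\mathbb{F})$ and let $\psi$ be an anti-automorphism of $A$. Since the transpose $t$ is itself an anti-automorphism of $A$ (as noted in the paragraph preceding the theorem), the composition $\varphi = t \circ \psi$ is an automorphism of $A$: indeed $\varphi(ab) = t(\psi(ab)) = t(\psi(b)\psi(a)) = t(\psi(a))\, t(\psi(b)) = \varphi(a)\varphi(b)$. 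Hence $\psi = t^{-1}\circ \varphi = t \circ \varphi$ with $\varphi$ an automorphism classified by Theorem \ref{Th_IN_M_3_NEW}; this is exactly the assertion. (For $M_{rcf}$ one uses Theorem \ref{Th_IN_M_3_NEW}(c), for $M_{\infty}$ part (a), and for $MJ(\mathbb{F})$ the corresponding statement proved in the companion of Theorem \ref{Th_IN_M_3_NEW}.) So part (a) is essentially free once the transpose is available; the only thing to double-check is that $t$ genuinely maps each of these three algebras to itself, which is immediate from the defining finiteness conditions since transposition swaps rows with columns (symmetric for $M_{rcf}$ and $MJ$) and preserves finite support.

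For part (b), the idea is that $M(I,\mathbb{F}) \cong \mathrm{End}_{\mathbb{F}}(V)$ acts on the left on $V$, which is a faithful irreducible (more precisely: simple) module, whereas $\mathrm{End}_{\mathbb{F}}(V)^{op}$ acts faithfully on the \emph{dual}-type object, and the two are not isomorphic for infinite-dimensional $V$. Concretely, I would argue at the level of one-sided ideals: in $\mathrm{End}_{\mathbb{F}}(V)$ the minimal left ideals are exactly the sets $\{f : \mathrm{im}(f)\subseteq \mathbb{F}v\}$ for $0\neq v\in V$, each of dimension $\dim V = |I|$ over $\mathbb{F}$; but a minimal \emph{right} ideal of $\mathrm{End}_{\mathbb{F}}(V)$, if it existed, would be a minimal left ideal of the opposite algebra, and one checks that $\mathrm{End}_{\mathbb{F}}(V)$ has \emph{no} minimal right ideals when $\dim V$ is infinite — a rank-one idempotent $e$ generates the right ideal $e\,\mathrm{End}_{\mathbb{F}}(V)$, which is isomorphic as a right module to $V^{*}$ and is not simple (it properly contains, e.g., the finitary transformations with image in $\mathbb{F}e$). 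An anti-isomorphism $M(I,\mathbb{F})\to M(I,\mathbb{F})$ would carry minimal left ideals to minimal right ideals, forcing the latter to exist — a contradiction. The same argument kills $\mathrm{End}_{fin}(V)$: here I would instead use that $\mathrm{End}_{fin}(V)$ \emph{does} have minimal left ideals (images inside a fixed line, now of dimension $|I|$) and also minimal right ideals (kernels of corank one), but count dimensions: a minimal left ideal has $\mathbb{F}$-dimension $|I|$ while a minimal right ideal has $\mathbb{F}$-dimension $|I|$ as well — so one must compare more carefully, e.g. via the socle as a bimodule, or note that left-multiplication makes $V$ a simple left module of $\mathbb{F}$-dimension $|I|$ with endomorphism ring $\mathbb{F}$, whereas any simple right module has $\mathbb{F}$-dimension strictly less than $|I|$ (it is a quotient of a minimal right ideal $e\cdot\mathrm{End}_{fin}(V)$, which consists of finite-rank maps factoring through $\mathrm{im}\,e$ and has dimension $\aleph_0\cdot|I|=|I|$...).

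I expect the right invariant for $\mathrm{End}_{fin}(V)$ to be cleaner stated as follows, and this is the step I regard as the main obstacle: $\mathrm{End}_{fin}(V)$ has a faithful simple left module, namely $V$ itself, of $\mathbb{F}$-dimension $\dim V$, and it has \emph{no} faithful simple right module at all (every simple right module is a quotient of $e\,\mathrm{End}_{fin}(V)$ for a rank-one $e$, and such a module, being a homomorphic image of finite-rank operators acting on a fixed line, is at most countable-dimensional over $\mathbb{F}$ when $\dim V$ is uncountable, hence has nonzero annihilator; and when $\dim V=\aleph_0$ one argues directly that no simple right module is faithful because the ideal of operators vanishing on any fixed finite-codimensional subspace acts trivially on it). Since "possession of a faithful simple left module" is an isomorphism invariant and is interchanged with "faithful simple right module" under passage to $A^{op}$, the algebra $\mathrm{End}_{fin}(V)$ cannot be isomorphic to $\mathrm{End}_{fin}(V)^{op}$. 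For $M(I,\mathbb{F})\cong\mathrm{End}_{\mathbb{F}}(V)$ the same dichotomy holds, with the non-existence of a faithful simple right module following from the non-existence of minimal right ideals noted above. I would therefore structure the write-up of (b) as: (i) identify $M(I,\mathbb{F})$ with $\mathrm{End}_{\mathbb{F}}(V)$ and recall the left-module picture; (ii) prove the two algebras have a faithful simple left module; (iii) prove they have no faithful simple right module, handling the countable and uncountable cases for $\dim V$ separately if needed; (iv) conclude by the $A\cong A^{op}$ obstruction. The delicate cardinality bookkeeping in step (iii) — especially making the uncountable case airtight — is where the real work lies; everything else is formal.
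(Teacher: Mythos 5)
Part (a) of your proposal is correct and is exactly the paper's argument: compose the anti-automorphism with the transpose, observe that the result is an automorphism, and invoke Theorem \ref{Th_IN_M_3_NEW}. (Minor caveat: Theorem \ref{Th_IN_M_3_NEW} does not classify automorphisms of $MJ(\mathbb{F})$, but the statement of part (a) only asserts the factorization through the transpose, so nothing more is needed there.)

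Part (b) has a genuine gap: both structural invariants you propose are false. The algebra $\mathrm{End}_{\mathbb{F}}(V)$ \emph{does} have minimal right ideals. For a rank-one idempotent $e=\lambda(\cdot)v$, the right ideal $e\,\mathrm{End}_{\mathbb{F}}(V)=\{\mu(\cdot)v : \mu\in V^{*}\}\cong V^{*}$ is simple as a right module: for any nonzero $\mu$ and any $\nu\in V^{*}$ one finds $a$ with $\mu\circ a=\nu$ (take $a(x)=\nu(x)v_0$ with $\mu(v_0)=1$), so every nonzero element generates the whole ideal. Your claimed obstruction --- that it properly contains the finitary transformations with image in $\mathbb{F}v$ --- confuses a proper subspace with a proper submodule; that subspace is not closed under right multiplication (precompose $e_1^{*}(\cdot)v$ with the map sending every basis vector to $e_1$ and you leave the finitary span). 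For the same reason the fallback invariant for $\mathrm{End}_{fin}(V)$ fails: $e\,\mathrm{End}_{fin}(V)\cong V^{*}$ is a \emph{faithful simple} right module, and its $\mathbb{F}$-dimension is $\dim V^{*}=|\mathbb{F}|^{\dim V}$, not $\aleph_0\cdot|I|=|I|$ --- the functionals $\mu$ arising as $\lambda\circ a$ with $a$ of finite rank already exhaust all of $V^{*}$.

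The salvageable (and correct) idea is the dimension comparison you start and then abandon: whichever multiplication convention you fix, the minimal one-sided ideals of one handedness are isomorphic to $V$ (dimension $\dim V$) and those of the other handedness to $V^{*}$ (dimension $|\mathbb{F}|^{\dim V}>\dim V$ by Erd\H{o}s--Kaplansky). An anti-automorphism is an $\mathbb{F}$-linear bijection carrying minimal left ideals onto minimal right ideals, which is impossible. This is precisely the route the paper takes, packaged as a citation of Jacobson's theorem (an anti-isomorphism of an algebra between $\mathrm{End}_{fin}(V|W)$ and $\mathrm{End}_{\mathbb{F}}(V|W)$ forces $\dim_{\mathbb{F}}V=\dim_{\mathbb{F}}W$) combined with the Erd\H{o}s--Kaplansky inequality for $W=V^{*}$. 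As written, your part (b) does not close.
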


\begin{theorem}\label{Th_IN_M_5_NEW} Let $\mathbb{F}$ be a field of the characteristic not equal to $2.$  \begin{enumerate}
		\item[\emph{(a)}]  An arbitrary automorphism $\varphi$ of the Lie algebra $L=\mathfrak{sl}_{\infty}(I,\mathbb{F}) $ is of the type $\varphi(a)=x^{-1}ax,$ $a\in L,$ or of the type $\varphi(a)=-x^{-1}a^t x,$ $a\in L,$  where $x\in GL_{rcf}(I,\mathbb{F});$
		\item[\emph{(b)}] an arbitrary automorphism $\varphi$ of the Lie algebra $L=\mathfrak{o}_{\infty}(I,\mathbb{F}) $ $($resp. $\mathfrak{sp}_{\infty}(I,\mathbb{F}))$ is of the type $\varphi(a)=x^{-1}ax,$ $a\in L,$ where $x\in GL_{rcf}(I,\mathbb{F})$ and $x x^t \in\mathbb{F}$ $($resp. $x x^s \in\mathbb{F});$
				\item[\emph{(c)}]  an arbitrary automorphism $\varphi$ of the Lie algebra $\mathfrak{gl}_{rcf}(I,\mathbb{F}) $ is of the type $\varphi(a)=x^{-1}ax,$ $a\in \mathfrak{gl}_{rcf}(I,\mathbb{F}) ,$ or of the type $\varphi(a)=-x^{-1}a^t x,$ $a\in \mathfrak{gl}_{rcf}(I,\mathbb{F}) ,$  where $x\in G(M_{rcf}(I, \mathbb{F}));$
		\item[\emph{(d)}] an arbitrary automorphism $\varphi$ of the Lie algebra $\mathfrak{gl}(I,\mathbb{F}) $ is of the type $\varphi(a)=x^{-1}ax,$ $a\in \mathfrak{gl}(I,\mathbb{F}),$   where $x\in G(M(I,\mathbb{F}));$
\item[\emph{(e)}]  an arbitrary automorphism $\varphi$ of the Lie algebra $\emph{End}_{fin}(V)^{(-)}$ is of the type $\varphi(a)=x^{-1}ax,$ $a\in \emph{End}_{fin}(V)^{(-)},$   where $x\in G(\emph{End}_{fin}(V)).$
			\end{enumerate}
\end{theorem}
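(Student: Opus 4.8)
The plan is to deduce the description of Lie automorphisms from the already-established descriptions of automorphisms and anti-automorphisms of the underlying associative algebras (Theorems \ref{Th_IN_M_3_NEW} and \ref{Th_IN_M_4_NEW}) by invoking the solution of Herstein's conjectures of Beidar, Bre\v{s}ar, Chebotar and Martindale. Each Lie algebra in the statement has the form $[R,R]$, $R^{(-)}$, or $K(R,{}^{*})$, for a prime associative $\mathbb{F}$-algebra $R$ that is infinite-dimensional over $\mathbb{F}$ and has extended centroid $\mathbb{F}$: one takes $R=M_\infty(I,\mathbb{F})$ in cases (a), (b), $R=M_{rcf}(I,\mathbb{F})$ in case (c), $R=M(I,\mathbb{F})$ in case (d), and $R=\text{End}_{fin}(V)$ in case (e) (primeness of $M_{rcf}(I,\mathbb{F})$, $M(I,\mathbb{F})$, $\text{End}_{fin}(V)$ is checked directly; $M_\infty(I,\mathbb{F})$ is simple). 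Since the dimension over the centroid is infinite, the exceptional low-degree cases of the relevant Herstein-type theorems do not occur, so an arbitrary Lie automorphism $\varphi$ of such an $L$ must satisfy $\varphi=\varepsilon\,\sigma|_L+\mu$, where $\sigma$ is an automorphism or an anti-automorphism of $R$ — or, after passing to the symmetric Martindale ring of quotients $Q(R)$, of $Q(R)$, in which case $\sigma$ stabilizes the socle $M_\infty(I,\mathbb{F})$ — with $\varepsilon=1$ if $\sigma$ is an automorphism and $\varepsilon=-1$ if $\sigma$ is an anti-automorphism, and where $\mu$ is an additive map of $L$ into the extended centroid vanishing on $[L,L]$. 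In cases (c), (d) the algebra $L$ is perfect (Theorem \ref{Pr_1_IN_M} and the remark after it), and in cases (a), (b), (e) the ring $R$ is non-unital with zero centre; in either situation $\mu$ takes values in $R\cap\mathbb{F}\cdot 1=0$, so $\mu=0$ and $\varphi=\varepsilon\,\sigma|_L$.

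Next I would identify $\sigma$ using the associative results. In case (a) the map $\sigma$ stabilizes $M_\infty(I,\mathbb{F})$ (this algebra is generated as an associative algebra by $\mathfrak{sl}_\infty(I,\mathbb{F})$), so $\sigma|_{M_\infty(I,\mathbb{F})}$ is an automorphism or anti-automorphism of $M_\infty(I,\mathbb{F})$; by Theorems \ref{Th_IN_M_3_NEW}(a) and \ref{Th_IN_M_4_NEW}(a) it has the form $a\mapsto x^{-1}ax$ or $a\mapsto x^{-1}a^{t}x$ with $x\in GL_{rcf}(I,\mathbb{F})$, whence $\varphi(a)=x^{-1}ax$ or $\varphi(a)=-x^{-1}a^{t}x$. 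In cases (d) and (e) the algebras $M(I,\mathbb{F})$ and $\text{End}_{fin}(V)$ have no anti-automorphisms (Theorem \ref{Th_IN_M_4_NEW}(b)), so $\sigma$ must be an automorphism, and $\varphi$ is the inner automorphism furnished by Theorem \ref{Th_IN_M_3_NEW}(c), respectively \ref{Th_IN_M_3_NEW}(b). Case (c) is handled in the same way, with $\sigma$ stabilizing $M_{rcf}(I,\mathbb{F})$ and Theorems \ref{Th_IN_M_3_NEW}(c), \ref{Th_IN_M_4_NEW}(a) in place; alternatively, one can first show that $\varphi$ preserves the minimal ideal $\mathfrak{sl}_\infty(I,\mathbb{F})$ of $\mathfrak{gl}_{rcf}(I,\mathbb{F})$, apply case (a) to $\varphi|_{\mathfrak{sl}_\infty(I,\mathbb{F})}$, and then use that the centralizer of $\mathfrak{sl}_\infty(I,\mathbb{F})$ in $\mathfrak{gl}_{rcf}(I,\mathbb{F})$ equals $\mathbb{F}\cdot 1$ and that $\mathfrak{gl}_{rcf}(I,\mathbb{F})$ is perfect to conclude that $\varphi$ agrees on all of $\mathfrak{gl}_{rcf}(I,\mathbb{F})$ with the conjugation, respectively transpose-conjugation, obtained on the ideal.

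For case (b) the skew-element version of the Herstein theorem gives, after absorbing the anti-automorphism branch into the automorphism branch via the involution (on skew elements, $-x^{-1}a^{t}x=x^{-1}ax$), that $\varphi(a)=x^{-1}ax$ with $x\in GL_{rcf}(I,\mathbb{F})$. Imposing that conjugation by $x$ preserves skew-symmetry — that $(x^{-1}ax)^{t}=-x^{-1}ax$ whenever $a^{t}=-a$ — and using $(x^{-1}ax)^{t}=-x^{t}a(x^{t})^{-1}$, one finds that $xx^{t}$ centralizes $\mathfrak{o}_\infty(I,\mathbb{F})$; as $\text{char}\,\mathbb{F}\neq 2$, this centralizer in $M_{rcf}(I,\mathbb{F})$ is $\mathbb{F}\cdot 1$, hence $xx^{t}\in\mathbb{F}$, and likewise $xx^{s}\in\mathbb{F}$ in the symplectic case. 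In all parts the converse — that each transformation of the stated form is indeed an automorphism of the corresponding Lie algebra — is straightforward to verify.

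The step I expect to be the main obstacle is the careful application of the Beidar--Bre\v{s}ar--Chebotar--Martindale machinery in this non-unital, and partly non-simple, setting: verifying primeness and the value of the (extended) centroid, locating the appropriate ring of quotients, confirming that $\sigma$ carries the finitary (socle) ideal onto itself, and ruling out the exceptional low-degree cases. A secondary, more elementary, difficulty is the centralizer computations that determine the precise groups $GL_{rcf}(I,\mathbb{F})=G(M_{rcf}(I,\mathbb{F}))$, $G(M(I,\mathbb{F}))$, $GL(V)$ and the conditions $xx^{t}\in\mathbb{F}$, $xx^{s}\in\mathbb{F}$ in parts (b) and (c).
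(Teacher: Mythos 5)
Your proposal is correct and follows essentially the same route as the paper: reduce each Lie automorphism to an automorphism or anti-automorphism of the underlying associative algebra via the Beidar--Bre\v{s}ar--Chebotar--Martindale resolution of Herstein's conjectures, kill the central summand using perfectness (Theorem \ref{Pr_1_IN_M}) or the absence of scalars, and then invoke Theorems \ref{Th_IN_M_3_NEW} and \ref{Th_IN_M_4_NEW}, with the same centralizer computation giving $xx^{t},xx^{s}\in\mathbb{F}$ in part (b). The only real difference is that you spell out the prime-ring/extended-centroid hypotheses needed for $M_{rcf}(I,\mathbb{F})$ and $M(I,\mathbb{F})$ (which are prime but not simple), a point the paper passes over by citing the simple-algebra versions of the Herstein-type theorems directly.
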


\textbf{Remark}. We don't have a description of automorphisms of the algebra~$\mathfrak{gl}_{J}(\mathbb{F}) .$

\section{Derivations of associative algebras of infinite matrices}

Recall the definition of the Tykhonoff topology. Let $X,$ $Y$ be arbitrary sets. Let $\text{Map}(X,Y)$ be the set of mappings $X\rightarrow Y.$ For distinct elements $a_1, \ldots, a_n\in X$ and arbitrary elements  $b_1, \ldots, b_n\in Y,$ $n \geq 1,$  consider the subset $$M(a_1, \ldots, a_n; b_1, \ldots, b_n)=\{ f:X \to Y \, | \, f(a_i)=b_i, \ 1 \leq i \leq n\}$$ of $\text{Map}(X,Y).$ The Tykhonoff topology on $\text{Map}(X,Y)$ is generated by all open sets of this type. In other words, $\text{Map}(X,Y)$ as a topological space is homeomorphic to the space $Y^{|X|},$ the Tykhonoff product of $|X|$ copies of $Y,$ where $Y$ is equipped with the discrete topology.

An associative $\mathbb{F}$-algebra $A$ is called a \emph{locally matrix algebra} if for each finite subset of $A$ there exists a subalgebra $B\subset A$ containing this subset and isomorphic to the algebra $M_n(\mathbb{F})$ of $(n\times n)$-matrices over $\mathbb{F}$ for some positive integer $n.$ For properties and  theory of locally matrix algebras, see \cite{Baranov2,14,BezushchakCarp,BezOl,BezOl_2,Kurosh}.

If $A$ is an associative $\mathbb{F}$-algebra and $M$ is an $A$-bimodule then the space $C^1(A,M)$ of all \emph{bimodule derivations} $A \to M$ lies in $\text{Map}(X,Y)$ and, therefore, is equipped with the Tykhonoff topology. Let $B^1 (A,M)$ be the  space of all \emph{inner bimodule derivations} $A\rightarrow M.$ The factor-space $H^1 (A,M)=C^1 (A,M)/B^1 (A,M)$ is called the first cohomology space (for details see, \cite{Drozd_Kirichenko,Pierce}).

We will need the following generalization of Theorem 1(1) from \cite{14}.

\begin{lemma}\label{Lem1_IN_M} If $A$ is a locally matrix algebra then for an arbitrary $A$-bimodule $M$ the subspace $B^1(A,M)$  is dense  in the space $C^1(A,M)$ in the Tykhonoff topology. \end{lemma}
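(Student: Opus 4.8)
The plan is to use the density criterion for the Tykhonoff topology: a subset $S \subseteq C^1(A,M)$ is dense if and only if for every finite subset $a_1,\dots,a_n \in A$ and every derivation $d \in C^1(A,M)$ there exists $\delta \in S$ with $\delta(a_i) = d(a_i)$ for all $i$. Since $A$ is a locally matrix algebra, I would first choose a subalgebra $B \subseteq A$ with $a_1,\dots,a_n \in B$ and $B \cong M_k(\mathbb{F})$ for some $k$, and I would moreover arrange (replacing $B$ by a slightly larger matrix subalgebra if necessary) that $B$ contains an identity element $e$ with the property that $e a_i = a_i e = a_i$; indeed in a locally matrix algebra any finite subset is contained in a unital matrix subalgebra, and the identity of that subalgebra acts as a local unit on its elements.

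The key step is then the following: given the derivation $d: A \to M$ and the idempotent $e = 1_B$, I want to produce an inner derivation $\operatorname{ad}(m): x \mapsto xm - mx$ (for a suitable $m \in M$) that agrees with $d$ on all of $B$, hence in particular on $a_1,\dots,a_n$. The natural candidate is to restrict attention to the subbimodule $eMe$ and the algebra $B$: since $B \cong M_k(\mathbb{F})$ is a finite-dimensional separable algebra, $H^1(B, N) = 0$ for every $B$-bimodule $N$, so $d|_B : B \to eMe$ (after correcting by the value $d(e)$, which lies in $M$ but can be adjusted since $d(e) = d(e^2) = ed(e) + d(e)e$ forces $ed(e)e = 0$, so that the "diagonal-corrected" map lands in $eMe$) is an inner derivation of $B$ with values in $eMe$, say $x \mapsto [x, m_0]$ for some $m_0 \in eMe$. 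One then checks that $\operatorname{ad}(m_0 + \text{(correction term built from } d(e)\text{)})$, as a derivation of the big algebra $A$, takes the same values as $d$ on $B$. This is essentially the computation in Theorem 1(1) of \cite{14}, and the generalization to an arbitrary bimodule $M$ (rather than $M = A$) is routine once the local-unit bookkeeping is in place.

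The main obstacle — the step requiring care — is the bookkeeping around $d(e)$: unlike in the case $M = A$ where one works inside the algebra, here $e$ need not act as an identity on $M$, so one must split $M$ using the orthogonal idempotent decomposition $1 = e + (1-e)$ coming from $eMe \oplus eM(1-e) \oplus (1-e)Me \oplus (1-e)M(1-e)$ (interpreting $1-e$ as the complementary action, even though $1 \notin A$), and verify that the inner derivation constructed from $m_0 \in eMe$ together with the off-diagonal corrections reproduces $d$ on $B$ exactly. Once that verification is done, density follows immediately: the inner derivation we constructed lies in $B^1(A,M)$ and lies in the basic open neighborhood $M(a_1,\dots,a_n; d(a_1),\dots,d(a_n))$ of $d$, and since $d$ and the finite set were arbitrary, $B^1(A,M)$ meets every basic open set, i.e.\ is dense in $C^1(A,M)$.
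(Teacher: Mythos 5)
Your proposal is correct and follows essentially the same route as the paper: verify density on the basic open sets by placing $a_1,\dots,a_n$ in a matrix subalgebra $B\cong M_k(\mathbb{F})$ and invoking the vanishing of $H^1$ for matrix algebras to realize $d|_B$ as an inner derivation $\mathrm{ad}(m)$ with $m\in M$. The only difference is in the bookkeeping: where you handle the non-unitality of $M$ over $B$ via the Peirce decomposition with respect to $e=1_B$ and a correction built from $d(e)$, the paper restricts $d$ to the finite-dimensional $B$-subbimodule $d(B)+Bd(B)+d(B)B+Bd(B)B$ and cites directly that every bimodule derivation of a matrix algebra over a field is inner.
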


\begin{proof} We need to show that for arbitrary elements $a_1,\ldots, a_n \in A$ and an arbitrary derivation $d:A \to M$ there exists an element $x\in M$ such that $[x,a_i]=d(a_i),$ $1 \leq i \leq n.$

There exists a subalgebra $A_1 \subset A$ such that $a_1,\ldots, a_n \in A_1$ and $A_1\cong M_k(\mathbb{F}).$ Then $$ M_1 = d(A_1)+ A_1 d(A_1)+ d(A_1)A_1 + A_1 d(A_1) A_1$$ is a finite-dimensional $A_1$-submodule of $M$ and the restriction of the derivation $d$ to $A_1$ is a derivation $A_1 \to M_1.$ Since every bimodule derivation of a matrix algebra over a field is inner it follows that there exists an element $x\in M_1$ such that $d(a)=[x,a]$ for all elements $a\in A_1.$  This completes the  proof of the lemma. \end{proof}

Let $(I\times I)(\mathbb{F})$ denote the vector space of all $(I\times I)$-matrices over $\mathbb{F}.$ Since for arbitrary matrices $a\in (I\times I)(\mathbb{F}),$ $b\in M_{\infty}(\mathbb{F})$ the products $ab,$ $ba$ are well defined it follows that the vector space $(I\times I)(\mathbb{F})$ is a bimodule over $M_{\infty}(\mathbb{F}).$ We will start with the following proposition.

\begin{proposition}\label{Th_IN_M_1(1)} $H^1 (M_{\infty}(I,\mathbb{F}), (I\times I)(\mathbb{F}) )= (0).$
\end{proposition}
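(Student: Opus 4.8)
The plan is to prove the equivalent statement that every bimodule derivation $d\colon M_{\infty}(I,\mathbb{F})\to (I\times I)(\mathbb{F})$ is inner, i.e.\ $d=\text{ad}(x)$ with $x=(xa-ax)$ for some $x\in (I\times I)(\mathbb{F})$. Fix once and for all an element, called $1\in I$, write $e_{ij}$ for the matrix units of $M_{\infty}(I,\mathbb{F})$, and put $e=e_{11}$. The first step is a normalization. Applying $d$ to $e=e^{2}$ gives $d(e)=d(e)\,e+e\,d(e)$; multiplying this identity on the left by $e$ yields $e\,d(e)\,e=0$, and a one‑line computation then shows that the element $x_{0}:=d(e)\,e-e\,d(e)$ of $(I\times I)(\mathbb{F})$ satisfies $[x_{0},e]=d(e)$. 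Since replacing $d$ by $d-\text{ad}(x_{0})$ changes $d$ only by an inner derivation, I may assume from now on that $d(e_{11})=0$.

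The core of the argument is to exhibit a single global element of $(I\times I)(\mathbb{F})$ implementing the (normalized) $d$ on all of $M_{\infty}(I,\mathbb{F})$. From $e_{1j}=e_{11}e_{1j}$ and $d(e_{11})=0$ one gets $d(e_{1j})=e_{11}\,d(e_{1j})$, so $d(e_{1j})$ is concentrated in the first row; symmetrically $d(e_{i1})=d(e_{i1})\,e_{11}$ is concentrated in the first column. Because $M e_{1j}$ always lies in the $j$‑th column, the summands of the (possibly infinite) sum
\[
x\ :=\ \sum_{j\in I} d(e_{j1})\,e_{1j}
\]
have pairwise disjoint supports, so $x$ is a well‑defined element of $(I\times I)(\mathbb{F})$. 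I then claim $d(e_{i1})=[x,e_{i1}]$ and $d(e_{1j})=[x,e_{1j}]$ for all $i,j\in I$. This is checked by a direct entrywise computation of $x e_{i1}-e_{i1}x$ and $x e_{1j}-e_{1j}x$, using three facts obtained by applying the Leibniz rule: to $e_{11}e_{i1}=0$ and $e_{11}e_{1j}=e_{1j}$ (giving that $d(e_{i1})$, $d(e_{1j})$ have vanishing $(1,1)$‑entry, so $x$ has zero first row and first column), and — crucially — to $e_{1j}e_{l1}=\delta_{jl}\,e_{11}$ together with $d(e_{11})=0$, which yields the matching relation $\bigl(d(e_{1j})\bigr)_{1l}+\bigl(d(e_{l1})\bigr)_{j1}=0$ for all $j,l\in I$. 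Feeding these into the commutators produces exactly $d(e_{i1})$ and $d(e_{1j})$.

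To finish, note that every matrix unit factors as $e_{ij}=e_{i1}e_{1j}$, so the derivation $d-\text{ad}(x)$, which vanishes on all $e_{i1}$ and all $e_{1j}$, vanishes on every $e_{ij}$, hence on the linear span $M_{\infty}(I,\mathbb{F})$. Thus $d=\text{ad}(x)$ (undoing the first step, the original $d$ equals $\text{ad}(x_{0}+x)$), so $d$ is inner and $H^{1}\bigl(M_{\infty}(I,\mathbb{F}),(I\times I)(\mathbb{F})\bigr)=(0)$. The one point requiring care is precisely the construction of $x$: Lemma~\ref{Lem1_IN_M} only tells us that $d$ agrees with \emph{some} inner derivation on each finite matrix block $M_{S}(\mathbb{F})$, with no compatibility between distinct blocks; the substance of the proof is that after the normalization $d(e_{11})=0$ these local solutions can be spliced into one honest matrix $x$ over the full index set $I\times I$, and the reduction of everything to the first row and first column via $e_{ij}=e_{i1}e_{1j}$ is what makes this splicing work. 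I expect the entrywise verification of the two commutator identities (resting on the matching relation) to be the only genuinely computational, and thus the most error‑prone, part.
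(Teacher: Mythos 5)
Your proof is correct, but it takes a genuinely different route from the paper's. The paper invokes Lemma~\ref{Lem1_IN_M} (density of inner derivations for locally matrix algebras, resting on the vanishing of $H^1(M_n(\mathbb{F}),M)$ for finite-dimensional bimodules) to produce, for every finite $J\subset I$, an element $y_J$ implementing $d$ on the block $M_{|J|}(\mathbb{F})$; it then observes that the $(J\times J)$-minors of the various $y_{J_1}$ agree up to a scalar matrix, normalizes by forcing the $(i_0,i_0)$-entry to be $0$, and glues the resulting compatible family into a single matrix $y$. You bypass Lemma~\ref{Lem1_IN_M} entirely: after normalizing $d(e_{11})=0$ by subtracting $\mathrm{ad}\bigl(d(e)e-e\,d(e)\bigr)$, you build the implementing element explicitly as $x=\sum_{j}d(e_{j1})e_{1j}$ (well defined since the summands occupy disjoint columns) and verify $d=\mathrm{ad}(x)$ on the generators $e_{i1}$, $e_{1j}$ using the matching relation from $e_{1j}e_{l1}=\delta_{jl}e_{11}$; I checked both commutator identities and they hold, and $e_{ij}=e_{i1}e_{1j}$ then finishes the argument. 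Your version is more elementary and yields an explicit formula for $x$, but it leans on having a global system of matrix units, whereas the paper's Lemma~\ref{Lem1_IN_M} is stated for arbitrary locally matrix algebras and is reused elsewhere in the paper --- presumably why the author routes the proof through the local-to-global gluing. One small imprecision: the vanishing of the $(1,1)$-entry of $d(e_{1j})$ for $j\neq 1$ comes from applying the Leibniz rule to $e_{1j}e_{11}=0$, not to $e_{11}e_{1j}=e_{1j}$ (the latter gives only the row-$1$ concentration); all the facts you use are true, just attributed to slightly the wrong identities.
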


\begin{proof} Let $d: M_{\infty}(I,\mathbb{F}) \to (I\times I)(\mathbb{F})$ be a bimodule derivation. By Lemma \ref{Lem1_IN_M}, for an arbitrary nonempty finite subset $J\subset I$ there exists an element $y_J \in (I\times I)(\mathbb{F})$ such that $d(a) = [y_J ,a]$ for all elements $a\in M_{|J|}(\mathbb{F}).$
	
Divide the matrix $y_J$ into blocks
	$$ y_J =\begin{pmatrix}
		y_J (11) &    y_J (12) \\
		y_J (21) &   y_J (22)
	\end{pmatrix},$$
where $y_J (11) \in M_{|J|}(\mathbb{F});$ $y_J (12)$ is a $(J\times (I\, \diagdown J))$-matrix over  $\mathbb{F};$ $y_J (21)$ is a $( (I\, \diagdown J)\times J)$-matrix over  $\mathbb{F};$ and $y_J (22)$ is a $((I\, \diagdown J)\times (I \,  \diagdown  J))$-matrix over  $\mathbb{F}.$ For an arbitrary element $a\in M_{|J|}(\mathbb{F})$ we have
	$$ [y_J ,a] =\begin{pmatrix}
	[y_J (11),a] &  -a\, y_J (12) \\
	y_J (21)\, a &  0
	\end{pmatrix}.$$

Let $J_1$  be another finite subset of $I,$ $J \subseteq J_1 .$ Then $[y_{J} , a]=[y_{J_1} , a]$ for all elements $a\in M_{|J|}(\mathbb{F}).$ Hence the $(J\times J)$-minor of $y_{J_1}$ differs from $y_J (11)$ by a scalar matrix.

We define a $(I\times I)$-matrix $y$ as follows. Fix $i_0 \in I.$ Let $y_{i_0,i_0}=0.$  All matrices $y_J,$ where $J$ runs over finite subsets of $I,$ $i_0 \in J,$ can be selected so that  $(y_J)_{i_0,i_0}=0.$

For $i,j\in I$ choose a finite subset $J\subset I$ such that $i,j,i_0 \in J.$ Define $y_{ij}= (y_J)_{ij}.$ Let $J_1$ be a finite subset of $I,$ $J\subseteq J_1.$ Since $y_J(11)$ and the $(J\times J)$-minor of $y_{J_1}$ differ by a scalar and $ (y_J)_{i_0,i_0}=(y_{J_1})_{i_0,i_0}=0$ it follows that $ (y_J)_{ij}=(y_{J_1})_{ij}.$ Hence $y_{ij}$ does not depend on a choice of the subset $J.$

For an arbitrary element $a\in M_{\infty}(I,\mathbb{F})$ the $(J\times J)$-minors of $(I\times I)$-matrices $d(a)=[y_J,a]$ and $[y,a]$ coincide as long as  $J \subset I$ is a finite subset, $i_0\in J,$ and $a\in M_{|J|}(\mathbb{F}).$ This implies that $d(a)= [y,a]$ and completes the proof of  the proposition.
\end{proof}

\begin{lemma}\label{Lem2_IN_M} Let $y\in (I\times I)(\mathbb{F}).$
\begin{enumerate}
	\item[\emph{(a)}] The inclusion $[y,M_{\infty}(I,\mathbb{F})]\subseteq M(I,\mathbb{F})$ implies $y\in M(I,\mathbb{F}).$
	\item[\emph{(b)}] The inclusion $[y,M_{\infty}(I,\mathbb{F})]\subseteq M_{rcf}(I,\mathbb{F})$ implies $y\in M_{rcf}(\mathbb{F}).$
\end{enumerate}
 \end{lemma}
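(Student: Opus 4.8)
The plan is to test each hypothesis against the matrix units $e_{ij}\in M_{\infty}(I,\mathbb{F})$ — the $(I\times I)$-matrix with $1$ in position $(i,j)$ and zeros elsewhere — and to read off the rows and columns of $y$ from the resulting commutators. A direct computation shows that $y\,e_{ij}$ is the matrix whose $j$-th column equals the $i$-th column of $y$ and all of whose other entries vanish, while $e_{ij}\,y$ is the matrix whose $i$-th row equals the $j$-th row of $y$ and all of whose other entries vanish. Consequently, for $i\neq j$ the commutator $[y,e_{ij}]=y\,e_{ij}-e_{ij}\,y$ has its $j$-th column equal to $(y_{ki})_{k\in I}$ except that the entry in row $i$ is $y_{ii}-y_{jj}$, has its $i$-th row equal to $(-y_{j\ell})_{\ell\in I}$ except that the entry in column $j$ is $y_{ii}-y_{jj}$, and has all its remaining entries equal to $0$.

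For part (a), fix $i\in I$ and choose any $j\in I$ with $j\neq i$, which is possible since $I$ is infinite. By hypothesis $[y,e_{ij}]\in M(I,\mathbb{F})$, so its $j$-th column has only finitely many nonzero entries; by the computation above that column consists of the scalars $y_{ki}$ for $k\neq i$ together with $y_{ii}-y_{jj}$ in row $i$, so the set $\{k\in I:y_{ki}\neq 0\}$ is finite. As $i$ was arbitrary, every column of $y$ is finitary, i.e.\ $y\in M(I,\mathbb{F})$.

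For part (b), part (a) already gives that every column of $y$ is finitary, so it remains to control the rows. Fix $j\in I$, choose $i\in I$ with $i\neq j$, and use that $[y,e_{ij}]\in M_{rcf}(I,\mathbb{F})$ has a finitary $i$-th row; that row consists of the scalars $-y_{j\ell}$ for $\ell\neq j$ together with $y_{ii}-y_{jj}$ in column $j$, so $\{\ell\in I:y_{j\ell}\neq 0\}$ is finite. Hence every row of $y$ is finitary as well, and therefore $y\in M_{rcf}(I,\mathbb{F})$.

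There is no real obstacle here beyond the bookkeeping involving the single exceptional diagonal term $y_{ii}-y_{jj}$: the only point needing care is to take $j\neq i$ so that this term lies in a position that does not interfere with the column (resp.\ row) of $y$ being extracted, which is precisely where the infiniteness of $I$ enters.
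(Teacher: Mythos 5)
Your proof is correct and follows essentially the same approach as the paper: testing the hypothesis against matrix units and reading off the columns (resp.\ rows) of $y$ from the commutators. The only difference is that the paper uses the diagonal units $e_{jj}$ (resp.\ $e_{ii}$), for which the extracted column (resp.\ row) of $[y,e_{jj}]$ agrees with that of $y$ off the diagonal with a $0$ on the diagonal, so the exceptional term $y_{ii}-y_{jj}$ and the choice $j\neq i$ never arise.
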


\begin{proof}  Let $e_{pq},$ $p,q \in I,$ denote the matrix unit having $1$ at the position $(p,q)$ and zeros elsewhere. We have $$ [y,e_{jj}]=y\, e_{jj} - e_{jj} \, y, \quad [y,e_{jj}]_{ij}= \begin{cases} y_{ij}, & i\neq j ,\\
0, & i=j.	
	\end{cases} $$ If $[y,e_{jj}]\in M(I,\mathbb{F})$ then $y_{ij}\neq 0$ for only finitely many $i\in I.$  Hence $y\in M(I,\mathbb{F}).$ Similarly, $$[y,e_{ii}]_{ij}= \begin{cases} -y_{ij}, & i\neq j, \\
	0, & i=j.	
\end{cases} $$ If $[y,e_{ii}]\in M_{rcf}(I,\mathbb{F})$ then $y_{ij}\neq 0$ for only finitely many $j\in I.$ This completes the proof of the lemma. \end{proof}

\begin{proof}[Proof of Theorem $\ref{Th_IN_M_1_NEW}$] (a) Let $d: M_{\infty}(I,\mathbb{F}) \rightarrow M(I,\mathbb{F})$ be a bimodule derivation. By Proposition \ref{Th_IN_M_1(1)}, there exists an $(I\times I)$-matrix $y=(y_{ij})_{i,j\in I}$ such that $d(a)=[y,a]$ for an arbitrary element $a\in M_{\infty}(I,\mathbb{F}).$ By Lemma \ref{Lem2_IN_M}(a), the matrix $y$ lies in $M(I,\mathbb{F}).$ Hence \begin{equation}\label{EQ1}
H^1 (M_{\infty}(I,\mathbb{F}), M (I,\mathbb{F}) )= (0).
\end{equation}

 Let $d: M_{\infty}(I,\mathbb{F}) \rightarrow M_{\infty}(I,\mathbb{F})$ be a derivation. By the above, there exists an $(I\times I)$-matrix $y$ such that $d(a)=[y,a]$ for an arbitrary element $a\in M_{\infty}(I,\mathbb{F}).$ By Lemma \ref{Lem2_IN_M}(b), we have  $y\in M_{rcf}(I,\mathbb{F}).$

(b) Choose a basis $v_i,$ $i\in I,$ in the vector space $V.$ Linear transformations from $\text{End}_{fin}(V)$ have matrices lying in the subalgebra $M_{r-fin}(I,\mathbb{F})$  of $M(I,\mathbb{F})$ that consists of $(I\times I)$-matrices that are column-finite and have finite range. These two conditions are equivalent to matrices from $M_{r-fin}(I,\mathbb{F})$ having finitely many nonzero rows.

We have $$M_{\infty}(I,\mathbb{F})  \subset M_{r-fin}(I,\mathbb{F}) \lhd M(I,\mathbb{F}).$$ Let $d: M_{r-fin}(I,\mathbb{F}) \rightarrow M_{r-fin}(I,\mathbb{F})$ be a derivation. The restriction of $d$ to $M_{\infty}(I,\mathbb{F})$ is a bimodule derivation from $M_{\infty}(I,\mathbb{F})$ to $M(I,\mathbb{F}).$ By (\ref{EQ1}),  there exists a matrix $y\in M(I,\mathbb{F})$ such that $d(a)=[y,a]$ for an arbitrary element $a\in M_{\infty}(I,\mathbb{F}).$

Consider the derivation $$d\,'=d-\text{ad}_{M_{r-fin}(I,\mathbb{F})}(y) $$ of the algebra $M_{r-fin}(I,\mathbb{F}).$ We have $d\,'( M_{\infty}(I,\mathbb{F})) =(0).$ For an  arbitrary element $a\in M_{r-fin}(I,\mathbb{F})$ and arbitrary indices $i,j\in I$ we have $$d\,'(e_{ii} \, a \, e_{jj})=e_{ii}\, d\,'(a)\,  e_{jj} =0.$$ Hence, $d\,'(a)_{ij}=0.$ We showed that $d\,' =0,$ which completes the proof of the part (b).

(c) Let $d: M_{rcf}(I,\mathbb{F}) \rightarrow M_{rcf}(I,\mathbb{F})$ be a  derivation. There exists a matrix $y\in M(I,\mathbb{F})$ such that $d(a)=[y,a]$ for an arbitrary element $a\in M_{\infty}(I,\mathbb{F}).$ By Lemma \ref{Lem2_IN_M}, the inclusion $$[y, M_{\infty}(I,\mathbb{F})] \subseteq M_{rcf}(I,\mathbb{F})$$ implies $y\in M_{rcf}(I,\mathbb{F}).$ Consider the derivation $$d\,'=d-\text{ad}(y) \ \text{of} \ M_{rcf}(I,\mathbb{F}), \quad d\,'(M_{\infty}(I,\mathbb{F}))=(0).$$ As in the proof of Theorem \ref{Th_IN_M_1_NEW}$(b),$ for  arbitrary indices $i,j \in I$ we have $$d\,'(e_{ii} \, M_{rcf}(I,\mathbb{F}) \, e_{jj})=e_{ii}\, d\,'(M_{rcf}(I,\mathbb{F})) \, e_{jj} =0$$ which implies $d\,'=0,$ $d=\text{ad}(y).$

 Let $d: MJ(\mathbb{F}) \rightarrow MJ(\mathbb{F})$ be a  derivation. Since $$ M_{\infty}(\mathbb{Z},\mathbb{F}) \subset MJ(\mathbb{F}) \subset M_{rcf}(\mathbb{Z},\mathbb{F}),$$ $$ H^1 (M_{\infty}(I,\mathbb{F}), M (I,\mathbb{F}) )= (0)$$ by Lemma \ref{Lem2_IN_M}, there exists a matrix $y\in M_{rcf}(\mathbb{Z},\mathbb{F})$ such that $d(a)=[y,a]$ for an arbitrary element $a\in M_{\infty}(\mathbb{Z},\mathbb{F}).$  Consider the bimodule derivation $$d\,'=d-\text{ad}(y) , \quad d\,':MJ(\mathbb{F}) \rightarrow M_{rcf}(\mathbb{Z},\mathbb{F}), \quad d\,'(M_{\infty}(\mathbb{Z},\mathbb{F}) )=(0).$$ Let us show that $d\,'(MJ(\mathbb{F}))=(0).$ Let $a\in MJ(\mathbb{F}),$ $a=(a_{ij})_{i,j\in \mathbb{Z}},$ $a_{ij}\in \mathbb{F}.$ There exists $k\geq 1$ such that $a_{ij}=0$ whenever $|i-j|>k.$

Let $n \geq 1.$ Consider matrices $a\,'(n),$ $a\,''(n):$ $$ a\,'(n)_{ij}= \begin{cases} a_{ij} & \text{if} \ |i|, |j| \leq n  \\
0  & \text{otherwise},	
	\end{cases} \quad \quad a\,''(n)_{ij}= \begin{cases} a_{ij} & \text{if} \ |i|>n \ \text{or} \ |j|> n \\
0 & \text{otherwise}.	
	\end{cases}$$ Clearly, $a=a\,'(n)+a\,''(n).$ If $a\,''(n+k)_{ij}\neq 0$ then $|i|>n$ and $|j|>n,$ that is, $$a\,''(n+k)\in  \begin{pmatrix}
		\ 0_{n\times n} \ & \rvline  & \  0 \ \\ \hline
		\ 0 \ & \rvline  & \ * \
	\end{pmatrix},$$ where $0_{n\times n}$  denote zero $(n\times n)$-matrix. Let us show that $$d\,'(a\,''(n+k))\in  \begin{pmatrix}
		\ 0_{n\times n} \ & \rvline  & \  0 \ \\ \hline
		\ 0 \ & \rvline  & \ * \
	\end{pmatrix}$$ as well. Indeed, if $i\leq n$ or $j\leq n$ then $$ d\,'(a\,''(n+k))_{ij}=d\,'(e_{ii}\, a\,''(n+k) \, e_{jj})=0$$ since $e_{ii}\, a\,''(n+k) \, e_{jj}=0.$ We have $d\,'(a\,'(n+k))=0.$ Hence  $$d\,'(a)\in  \begin{pmatrix}
		\ 0_{n\times n} \ & \rvline  & \  0 \ \\ \hline
		\ 0 \ & \rvline  & \ * \
	\end{pmatrix}$$ for any $n\geq 1.$ Hence $d\,'(a)=0,$ $d(a)=[y,a]$ for an arbitrary $a\in MJ(\mathbb{F}).$

Consider the matrix $$ E_1 =\sum_{i\in \mathbb{Z}} e_{i,i+1} \in MJ(\mathbb{F}).$$ It is easy to see that if $y \not\in MJ(\mathbb{F})$ then $[y,E_1] \not\in MJ(\mathbb{F})$ as well. This implies that $y\in MJ(\mathbb{F}).$

Finally, let $d: M(I,\mathbb{F})\rightarrow M(I,\mathbb{F})$ be a derivation. The ideal $M_{r-fin}(I,\mathbb{F})$ is invariant with respect to all derivations. By Theorem \ref{Th_IN_M_1_NEW}$(b),$ there exists an element $y\in M(I,\mathbb{F})$ such that $$ d\,|\,_{M_{r-fin}(I,\mathbb{F})} = \text{ad}_{M_{r-fin}(I,\mathbb{F})}(y).$$ Let $d\,'=d-\text{ad}(y).$ Then $d\,'(M_{r-fin}(I,\mathbb{F}))=(0).$ Arguing as above, we get $d\,' =0.$ This completes the proof of Theorem \ref{Th_IN_M_1_NEW}. \end{proof}

\section{Lie algebras of infinite matrices}\label{Lie algebras}

We will start with the proof of Theorem \ref{Pr_1_IN_M}.

Let $I$ be an arbitrary infinite set. It follows that $|I|=|\mathbb{N}\times I|.$ That is why without loss of generality, we will assume that $I$ is a direct product of $\mathbb{N}$ with another set $J,$ $|J|=|I|,$ $I=\mathbb{N}\times J.$

Therefore, we can view an arbitrary $(I\times I)$-matrix as an $(\mathbb{N}\times \mathbb{N})$-matrix over $(J\times J)(\mathbb{F}).$ For two positive integers $i,j\in \mathbb{N}$ and a $(J\times J)$-matrix $x$ let $e_{ij}(x)$ denote an $(I\times I)$-matrix having the block $x$ at the position $(i,j)$ and zeros elsewhere.

Consider the matrix $$ E =\sum_{i=1}^{\infty} e_{i,i+1}(\text{Id}) \in (I\times I)(\mathbb{F}),$$ where $\text{Id}$ is the identity $(J\times J)$-matrix. The $(I\times I)$-matrix $E$ contains $\leq 1$ nonzero elements in each row and in each column. Therefore, for an arbitrary $(I\times I)$-matrix $x$ the products $Ex,$ $xE$ make sense.

Let $a=(a_{ij})_{i,j\in \mathbb{N}},$ $a_{ij}\in (J\times J)(\mathbb{F}).$ Consider the $(I\times I)$-matrix
\begin{equation}\label{eq_1_part1}
\tilde{a}=(\tilde{a}_{ij})_{i,j\in \mathbb{N}}, \quad  \tilde{a}_{ij}=\sum_{k=0}^{\infty} a_{i-1-k,j-k},
 \end{equation}
where we let $a_{ij}=0$ for $i\leq 0$ or $j\leq 0.$ It is easy to see that for $a\in M(I,\mathbb{F})$ the matrix $\tilde{a}$ also lies in $M(I,\mathbb{F}).$

\begin{lemma}\label{lem_0}\!\footnote{V.V.~Sergeichuk (2020), personal communication.} $[E,\widetilde{a}]=a.$ \end{lemma}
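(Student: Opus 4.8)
The plan is to compute the commutator $[E,\widetilde{a}]$ block by block and check that the $(i,j)$-block equals $a_{ij}$. Recall $E = \sum_{i\geq 1} e_{i,i+1}(\mathrm{Id})$, so that for any block matrix $b = (b_{ij})_{i,j\in\mathbb{N}}$ over $(J\times J)(\mathbb{F})$ the products shift indices: $(Eb)_{ij} = b_{i+1,j}$ and $(bE)_{ij} = b_{i,j-1}$ (with the convention $b_{ij}=0$ when either index is $\leq 0$). Hence $[E,\widetilde{a}]_{ij} = \widetilde{a}_{i+1,j} - \widetilde{a}_{i,j-1}$. The hope is that the two telescoping-style sums defining $\widetilde{a}$ in \eqref{eq_1_part1} cancel all but one term.

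The key computation: by \eqref{eq_1_part1},
$$\widetilde{a}_{i+1,j} = \sum_{k=0}^{\infty} a_{i-k,\,j-k}, \qquad \widetilde{a}_{i,j-1} = \sum_{k=0}^{\infty} a_{i-1-k,\,j-1-k}.$$
Reindexing the second sum by $k' = k+1$ turns it into $\sum_{k'=1}^{\infty} a_{i-k',\,j-k'}$, which is exactly the first sum with its $k=0$ term (namely $a_{i,j}$) removed. Therefore $[E,\widetilde{a}]_{ij} = \widetilde{a}_{i+1,j} - \widetilde{a}_{i,j-1} = a_{ij}$, as claimed. Since this holds for all $i,j\in\mathbb{N}$, we conclude $[E,\widetilde{a}] = a$.

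The only point requiring a little care is that all the objects in sight are well-defined: the sums $\sum_{k\geq 0} a_{i-1-k,j-k}$ defining $\widetilde{a}_{ij}$ are actually finite (each index $i-1-k$ and $j-k$ is eventually $\leq 0$, so only finitely many $a$-blocks are nonzero), and the products $E\widetilde{a}$, $\widetilde{a}E$ make sense because $E$ has at most one nonzero entry in each row and column, as already noted in the excerpt. There is no convergence issue and no obstacle of substance here; the statement is essentially a bookkeeping identity, and the work is just matching up the index shifts of left/right multiplication by $E$ against the defining formula for $\widetilde{a}$. I would present it in the two or three lines above.
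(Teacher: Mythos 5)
Your proposal is correct and follows essentially the same route as the paper: both compute $[E,\widetilde{a}]_{ij}=\widetilde{a}_{i+1,j}-\widetilde{a}_{i,j-1}$ (with the convention $\widetilde{a}_{i,0}=0$) and then verify the telescoping identity $\widetilde{a}_{i+1,j}-\widetilde{a}_{i,j-1}=a_{ij}$ using the convention $a_{ij}=0$ for nonpositive indices. Your explicit reindexing $k'=k+1$ is just a slightly more detailed writing of the paper's final cancellation step.
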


\begin{proof} We have $$  E\  \widetilde{a}=\Big( \sum _{i=1}^{\infty}e_{i,i+1}(1) \Big) \Big(\sum _{j,k=1}^{\infty}e_{jk}(\widetilde{a}_{jk})\Big)
	=\sum _{i,k=1}^{\infty}e_{ik}(\widetilde{a}_{i+1,k});$$
\begin{multline*}    \widetilde{a}\ E=\Big( \sum _{j,k=1}^{\infty}e_{jk}(\widetilde{a}_{jk}) \Big) \Big(\sum _{i=1}^{\infty}e_{i,i+1}(1)\Big) = \\
=\sum _{j,i=1}^{\infty}e_{j,i+1}(\widetilde{a}_{ji})=\sum _{i\geq 1, k\geq 2}e_{ik}(\widetilde{a}_{i,k-1}).\end{multline*}
Finally, $$  [E,  \widetilde{a}]= \sum_{i,k=1}^{\infty}e_{ik}(\widetilde{a}_{i+1,k}-\widetilde{a}_{i,k-1}),$$ where $\widetilde{a}_{i,0}=0.$ It remains to verify that
\begin{equation}\label{eq_2_part1} \widetilde{a}_{i+1,j}-\widetilde{a}_{i,j-1}=a_{ij} \end{equation}
for all $i,j\in\mathbb{N}. $

Define $a_{ij}=0$ whenever $i\leq 0$ or $j\leq 0.$ Then  $$ \widetilde{a}_{ij}= \sum_{k=0}^{\infty}a_{i-1-k,j-k}.$$
Now, $$ \widetilde{a}_{i+1,j} - \widetilde{a}_{i,j-1}= \sum_{k=0}^{\infty}a_{i-k,j-k}- \sum_{k=0}^{\infty}a_{i-1-k,j-1-k}=a_{ij},$$ which completes the proof of the lemma. \end{proof}

\begin{proof}[Proof of Theorem $\ref{Pr_1_IN_M}$] (a) An arbitrary $(I\times I)$-matrix $a$ over the field $\mathbb{F}$ can be divided into blocks $a=(a_{ij})_{\mathbb{N}\times\mathbb{N}},$ where each block $a_{ij}$ is a  $(J\times J)$-matrix.
If $a$ lies in  $\mathfrak{gl}(I,\mathbb{F})$ then each block $a_{ij}$ lies in  $\mathfrak{gl}(J,\mathbb{F}).$  The reverse statement is not true.

Suppose that $a \in \mathfrak{gl}(I,\mathbb{F}).$ We will show that the matrix $\widetilde{a}$ (see above) also lies in $\mathfrak{gl}(I,\mathbb{F}).$ Choose a column indexed by $(n,\alpha)\in \mathbb{N}\times J,$ $n\in \mathbb{N}, $ $\alpha \in J.$ We need to verify that the $\alpha$-th column of the $(I\times J)$-matrix
$$\begin{pmatrix}
	\widetilde{a}_{1n} \\
    \widetilde{a}_{2n}\\
	\vdots
	\end{pmatrix} $$
contains finitely many nonzero entries. Since the $(I\times I)$-matrix $a$ has finitely many nonzero entries in each column it follows that for an arbitrary $l\in \mathbb{N}$ there exists a positive integer $N(l,\alpha)$ such that the $(J\times J)$-matrix $a_{kl}$ has zero  $\alpha$-th column for $k>N(l,\alpha).$

Let $i>n+\max (N(1,\alpha) + \cdots + N(n,\alpha))=s.$ Then $i-2 \geq n-1.$ By the equality (\ref{eq_1_part1}), we have $$\widetilde{a}_{in} =a_{i-1,n} + \cdots + a_{i-n,1}.$$ All summands on the right hand side have zero $\alpha$-th column. Hence the matrix $\widetilde{a}_{in}$ has zero $\alpha$-th column. The matrix
$$\begin{pmatrix}
	\widetilde{a}_{1n} \\
	\vdots \\
	\widetilde{a}_{sn}
\end{pmatrix} $$
has finitely many nonzero entries in the $\alpha$-th column since the $(J\times J)$-matrices $\widetilde{a}_{1n} ,$ $\dots,$ $\widetilde{a}_{sn} $ have finitely many nonzero entries in each column.

We showed that the matrix $\widetilde{a}$ lies in $\mathfrak{gl}(I,\mathbb{F}).$ As above, let $$E= \sum _{i=1}^{\infty}e_{i,i+1}(\text{Id}_J),$$ where  $\text{Id}_J$ is the identity  $(J\times J)$-matrix. By Lemma \ref{lem_0}, we have $$a=[E,\widetilde{a}] \in [\mathfrak{gl}(I,\mathbb{F}),\mathfrak{gl}(I,\mathbb{F})].$$ Hence, we proved Theorem~\ref{Pr_1_IN_M}(a).

\vspace{10pt}
(b) We need to show that for an arbitrary matrix $a\in M_{rcf}(I,\mathbb{F})$ the matrix $\tilde{a}$ also lies in $M_{rcf}(I,\mathbb{F}).$ Consider indices $(k,j),(k\,',j\,')\in \mathbb{N}\times J.$ The entry $\tilde{a}_{(k,j),(k\,',j\,')}$ of the matrix $\tilde{a}$ is the $(j,j\,')$-th entry of the block $\tilde{a}_{k,k\,'}.$ We have $$\tilde{a}_{k,k\,'} =\sum_{t=0}^{k-2} a_{k-1-t,k\,' -t}. $$ Hence, $$\tilde{a}_{(k,j),(k\,',j\,')} =(\tilde{a}_{k,k\,'})_{j,j\,'} =\sum_{t=0}^{k-2} (a_{k-1-t,k\,' -t})_{j,j\,'}= \sum_{t=0}^{k-2} a_{(k-1-t,j),(k\,'-t,j\,')}.$$
We showed that the $(k,j)$-th row of the matrix $\tilde{a}$ is a sum of permuted $(1,j)$-th, $\ldots,$ $(k-1,j)$-th rows of the matrix $a.$ Hence, every row of the matrix $\tilde{a}$ contains finitely many nonzero elements, $\tilde{a}\in M_{rcf}(I,\mathbb{F}).$ This proves the part (b) of the theorem.

\vspace{10pt}
(c) Consider a $(\mathbb{Z}\times \mathbb{Z})$-matrix $$E_J =\sum_{i\in \mathbb{Z}} e_{i,i+1}(1).$$ For a $(\mathbb{Z}\times \mathbb{Z})$-matrix  $a=(a_{ij})_{i,j\in \mathbb{Z}}$ consider  the matrix $\tilde{a}=(\tilde{a}_{ij})_{i,j\in \mathbb{Z}},$ where $$ \tilde{a}_{ij} = \begin{cases}
                                                                                       a_{i-1,j}+ \cdots + a_{1,2-i+j} & \hbox{for $i\geq 2$} \\
                                                                                       0 & \hbox{for $i=1$} \\
                                                                                       -a_{i,j+1}- \cdots - a_{0,j-i+1} & \hbox{for $i\leq 0$.}
                                                                                     \end{cases} $$
We claim that $[E_J,\tilde{a}]=a.$ Indeed, a straightforward computation shows that
\begin{enumerate}
  \item[$(1)$] $[E_J,\tilde{a}]_{ij}=\tilde{a}_{i+1,j}-\tilde{a}_{i,j+1},$
  \item[$(2)$] $\tilde{a}_{i+1,j}-\tilde{a}_{i,j+1}=a_{ij}$
\end{enumerate}
for all $i,j\in \mathbb{Z}.$

It remains to check that for an arbitrary matrix $a\in MJ(\mathbb{F})$ the matrix $\tilde{a}$ also lies in $MJ(\mathbb{F}).$ Suppose that $a_{ij}=0$ whenever $|i-j|>k.$ The expression for $\tilde{a}_{ij}$ implies that $\tilde{a}_{ij}=0$ whenever $|i-j|>k+1.$ Hence $\tilde{a}\in MJ(\mathbb{F}).$ This completes the proof of the  part~(c).

\vspace{10pt}
(d) The Lie algebra $\mathfrak{o}_{\infty}(I,\mathbb{F})$ (resp., $\mathfrak{sp}_{\infty}(I,\mathbb{F})$) is simple since it has a local system of simple finite-dimensional subalgebras $\mathfrak{o}_{\infty}(I_0,\mathbb{F})$ (resp., $\mathfrak{sp}_{\infty}(J_0,\mathbb{F})$), where $I_0,$ $J_0$ run over all nonempty finite subsets of $I,$ the order of $J_0$ is even. This implies $\mathfrak{o}_{\infty}(I,\mathbb{F})=[\mathfrak{o}_{\infty}(I,\mathbb{F}),\mathfrak{o}_{\infty}(I,\mathbb{F})]$ and $\mathfrak{sp}_{\infty}(I,\mathbb{F})=[\mathfrak{sp}_{\infty}(I,\mathbb{F}),\mathfrak{sp}_{\infty}(I,\mathbb{F})]$. \end{proof}

In \cite{Herstein}, I.N.~Herstein formulated a series of conjectures about links between derivations and automorphisms of an associative algebra $A$ and derivations and automorphisms of Lie algebras $[A,A],$ $K(A,*).$ These conjectures were proved by K.~Beidar, M.~Bre\v{s}ar, M.~Chebotar and W.~Martindale in \cite{Bei_Bre_Cheb_Mart_1,Bei_Bre_Cheb_Mart_2,Bei_Bre_Cheb_Mart_3}.

We will formulate here and in Sec.~\ref{Aut_par} only particular cases of their results that are directly related to this work:

\vspace{10pt}
\hspace{-12pt}\textbf{Theorem I (K.I.~Beidar, \ M.~Bre\v{s}ar, \ M.~Chebotar \ and W.S.~Martindale; see, Cor. 1.4(b), \cite{Bei_Bre_Cheb_Mart_3})}  Let $A$ be a simple associative algebra with the center $Z.$ The algebra $A$ is not finite-dimensional over $Z.$ Let $d$ be a derivation of the Lie algebra $[A,A] \diagup [A,A]\cap Z.$ Then there exists a derivation $\tilde{d}:A\rightarrow A$ of the associative algebra $A$ such that $d(a)-\tilde{d}(a)\in Z$ for an arbitrary element $a\in [A,A].$

\vspace{10pt}
\hspace{-12pt}\textbf{Theorem II (K.I.~Beidar, \ M.~Bre\v{s}ar, \ M.~Chebotar \ and W.S.~Martindale; see, Cor. 1.9(b), \cite{Bei_Bre_Cheb_Mart_3})} Let $A$ be a simple associative algebra with an involution $*:A\rightarrow A.$ Let $Z$ be the center of the algebra $A.$ The algebra $A$ is not finite-dimensional over $Z.$ Let $d$ be a derivation of the Lie algebra $[K,K] \diagup [K,K]\cap Z,$ where $K=K(A,*)=\{a\in A\, | \, a^{*}=-a\}.$ Then there exists a derivation $\tilde{d}:A\rightarrow A$ of the associative algebra $A$ such that $d(a)-\tilde{d}(a)\in Z$ for an arbitrary element $a\in [K,K].$

\begin{proof}[Proof of Theorem $\ref{Th_IN_M_2_NEW}$] (a) The algebra $A=M_{\infty}(I,\mathbb{F})$ is simple and has zero center. Let $d$ be a derivation of the Lie algebra $\mathfrak{sl}_{\infty}(I,\mathbb{F})=[A,A].$ By Theorem I above, there exists a derivation $\tilde{d}$ of the algebra $A$ that coincides with $d$ on $[A,A].$  By Theorem \ref{Th_IN_M_1_NEW}(a), the derivation $\tilde{d}$ looks as  $\text{ad}_{\mathfrak{sl}_{\infty}(I,\mathbb{F})}(a),$ where $a\in M_{rcf}(I,\mathbb{F}).$

\vspace{10pt}
(b) Now, let $*$ be the transpose involution or the symplectic involution of the algebra $A=M_{\infty}(I,\mathbb{F}).$ Then  $\mathfrak{o}_{\infty}(I,\mathbb{F})$ (resp., $\mathfrak{sp}_{\infty}(I,\mathbb{F})$) is the Lie algebra of skew-symmetric elements $K=K(A,*).$ By Theorem II above, there exists a  derivation $\tilde{d}$ of the algebra $A$ that coincides with $d$ on $[K,K].$  By Theorem \ref{Pr_1_IN_M}$(d),$ $[K,K]=K.$ Theorem \ref{Th_IN_M_1_NEW}(a) implies that the  derivation $\tilde{d}$ is the restriction of a derivation   $\text{ad}_{M_{\infty}(I,\mathbb{F})}(a),$ where $a\in M_{rcf}(I,\mathbb{F}).$

Both the transpose and the symplectic involution on the algebra $M_{\infty}(I,\mathbb{F})$ extend to $M_{rcf}(I,\mathbb{F}).$ Let $$a=a_h + a_k, \quad \text{where} \quad a_h^{*}=a_h,\quad a_k^{*}=-a_k.$$ For an arbitrary element $b\in K$ we have $  [a,b]=[a_h ,b]+ [a_k , b]\in K.$ However, $$ [a_h,b]^{*}=[a_h ,b], \quad [a_k , b]^{*}=-[a_k,b].$$ Hence $[a_h,K]=(0),$ $[a,b]=[a_k,b],$ that is, $\text{ad}_K(a)=\text{ad}_K(a_k),$ where $a_k\in K(M_{rcf}(I,\mathbb{F}),*).$

\vspace{10pt}
(c) Let $d$ be a derivation of the Lie algebra $\mathfrak{gl}(I,\mathbb{F})=M(I,\mathbb{F})^{(-)}.$ By Theorem I of Beidar-Bre\v{s}ar-Chebotar-Martindale \cite{Bei_Bre_Cheb_Mart_3}, there exists a derivation $\tilde{d}$  of the  associative algebra $M(I,\mathbb{F})$ such that $d(a)-\tilde{d}(a)$ lies in the center $Z$ of $M(I,\mathbb{F})$ for all elements $a\in [M(I,\mathbb{F}),M(I,\mathbb{F})].$ By Theorem \ref{Pr_1_IN_M}(a),  $[M(I,\mathbb{F}),M(I,\mathbb{F})]=M(I,\mathbb{F}),$ hence $d(a)-\tilde{d}(a)\in Z$ for all elements $a\in M(I,\mathbb{F}).$

Now, for arbitrary elements $a,b\in M(I,\mathbb{F})$  we have $$d([a,b])=[d(a),b] + [a, d(b)]= [\tilde{d}(a),b]+[a,\tilde{d}(b)]=\tilde{d}([a,b]).$$ Hence, $d=\tilde{d}$ on  $[M(I,\mathbb{F}),M(I,\mathbb{F})].$ Again by Theorem  \ref{Pr_1_IN_M}(a), we get $d=\tilde{d}.$ It remains to refer to Theorem \ref{Th_IN_M_1_NEW}(c). This proves the assertion about derivations of  $\mathfrak{gl}(I,\mathbb{F}).$

Let $d$ be a derivation of the Lie algebra $\mathfrak{gl}_{rcf}(I,\mathbb{F}).$ The subalgebra $\mathfrak{sl}_{\infty}(I,\mathbb{F})$ is an ideal of $\mathfrak{gl}_{rcf}(I,\mathbb{F}).$ For arbitrary elements $a,b\in \mathfrak{sl}_{\infty}(I,\mathbb{F})$ we have $$d([a,b])=[d(a),b] + [a,d(b)]\in [\mathfrak{gl}_{rcf}(I,\mathbb{F}), \mathfrak{sl}_{\infty}(I,\mathbb{F})]\subseteq \mathfrak{sl}_{\infty}(I,\mathbb{F}).$$ Hence the ideal $\mathfrak{sl}_{\infty}(I,\mathbb{F})$ is differentially invariant. By Theorem \ref{Th_IN_M_2_NEW}$(a),$ there exists an element $a\in \mathfrak{gl}_{rcf}(I,\mathbb{F})$ such that $d(x)=[a,x]$ for an arbitrary element $x\in \mathfrak{sl}_{\infty}(I,\mathbb{F}).$ Consider the derivation $d\,'= d- \text{ad}(a)$ of the algebra $\mathfrak{gl}_{rcf}(I,\mathbb{F}).$ We have $$d\,'(\mathfrak{sl}_{\infty}(I,\mathbb{F})) =(0).$$ For arbitrary elements $x\in \mathfrak{sl}_{\infty}(I,\mathbb{F}),$ $y\in \mathfrak{gl}_{rcf}(I,\mathbb{F})$ we have $$ 0= d\,'([x,y])=[x,d\,'(y)].$$ Hence $d\,' (\mathfrak{gl}_{rcf}(I,\mathbb{F}))$ lies in the centralizer of  $\mathfrak{sl}_{\infty}(I,\mathbb{F}).$ It is easy to see that the centralizer of  $\mathfrak{sl}_{\infty}(I,\mathbb{F})$ in the algebra  $\mathfrak{gl}_{rcf}(I,\mathbb{F})$ is the space of scalar matrices $\alpha\cdot \text{Id},$ where $\alpha\in \mathbb{F}$ and $\text{Id}$ is the identity $(I\times I)$-matrix. Hence $$ d\,' (\mathfrak{gl}_{rcf}(I,\mathbb{F}))\subseteq \mathbb{F} \cdot \text{Id}.$$ For arbitrary elements $a,b\in \mathfrak{gl}_{rcf}(I,\mathbb{F})$ we have $d\,'([a,b])=[d\,'(a),b] + [a,d\,'(b)]=0. $ Hence,  $$ d\,' ([\mathfrak{gl}_{rcf}(I,\mathbb{F}),\mathfrak{gl}_{rcf}(I,\mathbb{F})])=(0).$$ By Theorem \ref{Pr_1_IN_M}$(a),$ $$ [\mathfrak{gl}_{rcf}(I,\mathbb{F}),\mathfrak{gl}_{rcf}(I,\mathbb{F})]=\mathfrak{gl}_{rcf}(I,\mathbb{F}),$$ which implies $d\,' =0,$ $d=\text{ad}(a).$ This completes the proof of the part concerning $\mathfrak{gl}_{rcf}(I,\mathbb{F}).$

Let $d:\mathfrak{gl}_{J}(\mathbb{F})\rightarrow \mathfrak{gl}_{J}(\mathbb{F})$ be a derivation. As above, we conclude that the ideal  $\mathfrak{sl}_{\infty}(\mathbb{Z},\mathbb{F})$ of the algebra $\mathfrak{gl}_{J}(\mathbb{F})$ is differentially invariant. By Theorem \ref{Th_IN_M_2_NEW}$(a),$ there exists a matrix $a\in M_{rcf}(\mathbb{Z},\mathbb{F})$ such that $d(x)=[a,x]$ for an arbitrary element $x\in \mathfrak{sl}_{\infty}(\mathbb{Z},\mathbb{F}).$ Consider the bimodule derivation $$d\,' :\mathfrak{gl}_{J}(\mathbb{F})\rightarrow \mathfrak{gl}_{rcf}(\mathbb{Z},\mathbb{F}), \quad d\,'(x)=d(x)-[a,x], \quad x\in \mathfrak{gl}_{J}(\mathbb{F}).$$ We have $d\,'(\mathfrak{sl}_{\infty}(\mathbb{Z},\mathbb{F})) =(0).$ As above, for arbitrary elements $x\in \mathfrak{sl}_{\infty}(\mathbb{Z},\mathbb{F}),$ $y\in \mathfrak{gl}_{J}(\mathbb{F})$ we have $$ 0= d\,'([x,y])=[x,d\,'(y)].$$ Hence $d\,' (\mathfrak{gl}_{J}(\mathbb{F}))$ lies in the centralizer of  $\mathfrak{sl}_{\infty}(\mathbb{Z},\mathbb{F})$ in the algebra $\mathfrak{gl}_{rcf}(\mathbb{Z},\mathbb{F}).$ In other words,  $$ d\,' (\mathfrak{gl}_{J}(\mathbb{F}))\subseteq \mathbb{F} \cdot \text{Id},$$ where $\text{Id}$ is the identity $(\mathbb{Z}\times \mathbb{Z})$-matrix. By Theorem \ref{Pr_1_IN_M}(a), $\mathfrak{gl}_{J}(\mathbb{F})=[\mathfrak{gl}_{J}(\mathbb{F}),\mathfrak{gl}_{J}(\mathbb{F})]$. So, by Proposition \ref{Pr_1_IN_M}(2), we conclude that  $d\,' =0.$ Hence  $d(x)=[a,x]$ for an arbitrary element $x\in \mathfrak{gl}_{J}(\mathbb{F}). $

It was already mentioned in the proof of Theorem \ref{Th_IN_M_1_NEW}(c) that if a $(\mathbb{Z}\times \mathbb{Z})$-matrix $a$ does not lie in $\mathfrak{gl}_{J}(\mathbb{F})$ then the matrix $[a,E_1],$ $E_1=\sum_{i\in \mathbb{Z}} e_{i,i+1}(1),$ also does not lie in $\mathfrak{gl}_{J}(\mathbb{F}).$ Hence $a\in \mathfrak{gl}_{J}(\mathbb{F}),$ which completes the proof of Theorem \ref{Th_IN_M_2_NEW}. \end{proof}

\section{Automorphisms of algebras of infinite matrices}\label{Aut_par}

Let $V$ be an infinite-dimensional vector space and let $V^{*}$  be the dual space of $V.$ A subspace $W\subseteq V^{*}$  is called \textit{total} if $$ \bigcap_{w\in W} \ker w =(0).$$

Recall the action of the algebra $\text{End}_{\mathbb{F}}(V)$ on the dual space $V^{*}$: for a linear transformation $a\in\text{End}_{\mathbb{F}}(V)$ and a linear functional $f\in V^{*}$ we define $(fa)(v)=f(a(v)),$ $v\in V.$

For a total subspace $W\subseteq V^{*}$ consider the subalgebra  $\text{End}_{\mathbb{F}}(V|W)=\{a\in \text{End}_{\mathbb{F}}(V) \, | \, Wa \subseteq W\}.$  Then the subalgebra  $$\text{End}_{fin}(V|W)=\text{End}_{\mathbb{F}}(V|W)\cap \text{End}_{fin}(V)$$  is a locally matrix algebra that can be identified with the tensor product $ V \otimes_{\mathbb{F}} W.$ Moreover, $\text{End}_{fin}(V|W)$ is  dense in $ \text{End}_{\mathbb{F}}(V)$ in the Tykhonoff topology (see, \cite{Jacobson}).

Let $\mathcal{E}=\{e_i, i\in I\}$ be a basis of the vector space $V$ and let $\mathcal{E}^{*}=\{e_i^{*}, i\in I\}\subset V^{*}$ be the dual basis: $e_i^{*}(e_j)=\delta_{ij}e_j.$ It is easy to see that the subspace $\text{Span}_{\mathbb{F}}(\mathcal{E}^{*})$ is total. In the basis $\mathcal{E}$ the algebra $ \text{End}_{\mathbb{F}}(V|\text{Span}_{\mathbb{F}}(\mathcal{E}^{*}))$ corresponds to the matrix algebra $M_{rcf}(I,\mathbb{F}).$ The subalgebra $ \text{End}_{fin}(V|\text{Span}_{\mathbb{F}}(\mathcal{E}^{*}))$ corresponds to  $M_{\infty}(I,\mathbb{F}).$

If $W=V^{*}$ then $\text{End}_{\mathbb{F}}(V|W)=\text{End}_{\mathbb{F}}(V),$ $\text{End}_{fin}(V|W)=\text{End}_{fin}(V).$

N.~Jacobson (\cite{Jacobson}, Chap.~9, Sec.~11, Th.~7) proved that for an arbitrary algebra $A,$ $$ \text{End}_{fin}(V|W)\subseteq A \subseteq \text{End}_{\mathbb{F}} (V|W)$$ and an arbitrary automorphism $\varphi$ of $A$ there exists an element $x\in G(\text{End}_{\mathbb{F}}(V|W))$ such that $\varphi(a)=x^{-1}ax,$ $a\in A.$

Substituting $W=\text{Span}_{\mathbb{F}}(\mathcal{E}^{*})$ or $W=V^{*}$  we obtain Theorem \ref{Th_IN_M_3_NEW}.

In the proof of Theorem \ref{Th_IN_M_5_NEW} on automorphisms of Lie algebras, we will again rely on the work \cite{Bei_Bre_Cheb_Mart_3} of K.I.~Beidar, M.~Bre\v{s}ar, M.~Chebotar and W.S.~Martindale on Herstein's Conjectures.

The results in \cite{Bei_Bre_Cheb_Mart_3} are quite general. As in Sec.~\ref{Lie algebras}, we will use only a small part of them that is directly related to our proof.

\vspace{10pt}
\hspace{-12pt}\textbf{Theorem III (K.I.~Beidar, \ M.~Bre\v{s}ar, \ M.~Chebotar \ and W.S.~Martindale; see, Cor. 1.2 (c), \cite{Bei_Bre_Cheb_Mart_3})}\label{BBCM1} Let $\emph{char}\,\mathbb{F}\neq 2$ and let $A$ be a simple associative $\mathbb{F}$-algebra with the center $Z$ and $A$ is not  finite-dimensional over $Z.$ If $\alpha$ is an automorphism of the Lie algebra $[A,A]$ then there exist mappings $\varphi, \psi:A\rightarrow A$ such that
\begin{enumerate}
  \item[$(i)$] $\varphi$ is either an automorphism or the negative of an anti-automor\-phism of $A,$
  \item[$(ii)$] $\psi(A)\subseteq Z,$ $\psi([A,A])=(0),$ 
\end{enumerate}
and $\alpha(a)=\varphi(a)+\psi(a)$ for all $a\in [A,A].$

\vspace{10pt}
\hspace{-12pt}\textbf{Theorem IV (K.I.~Beidar, \ M.~Bre\v{s}ar, \ M.~Chebotar \ and W.S.~Martindale; see, Cor. 1.7, \cite{Bei_Bre_Cheb_Mart_3})}\label{BBCM2} Let $\emph{char}\,\mathbb{F}\neq 2$ and Let $A$ be a simple associative $\mathbb{F}$-algebra with an involution and let $K$ be the Lie algebra of skew-symmetric elements. Suppose  that the algebra $A$ is not finite-dimensional over $Z.$ Then for an arbitrary automorphism $\alpha$ of the Lie algebra $[K,K]$ there exists an automorphism $\varphi$ of the algebra $A$ such that $\alpha(a)=\varphi(a) $ for all $a\in [K,K].$

\vspace{10pt}
Theorem III above implies that in order to describe Lie   automorphisms of a simple associative algebra we need to describe its anti-automorphisms first.

\begin{proof}[Proof of Theorem $\ref{Th_IN_M_4_NEW}$] (a) The algebras $M_{\infty}(I,\mathbb{F}),$  $M_{rcf}(I,\mathbb{F}),$  $MJ(\mathbb{F})$ are closed with respect to the transpose $t.$ If $*$ is an anti-automorphism then the composition $\varphi$ of $*$ and $t$ is an automorphism, $(a^t)^{*}=\varphi(a)$ for an arbitrary element $a.$ Hence, $a^{*}=\varphi(a^t).$

\vspace{10pt}
(b) N.~Jacobson (\cite{Jacobson}, Chap. IX.12, Th. 8) showed that if an algebra $A,$ $\text{End}_{fin}(V|W)\subseteq A \subseteq \text{End}(V|W),$ has an anti-isomorphism then $\dim_{\mathbb{F}}V = \dim_{\mathbb{F}} W.$ P.~Erdos and J.~Kaplansky (see, \cite{Jacobson}, Chap. IX.5, Th. 2) showed that $$\dim_{\mathbb{F}}V^{*}= |\mathbb{F}|^{\dim_{\mathbb{F}}V}> \dim_{\mathbb{F}}V.$$ Hence, the algebras $$\text{End}_{fin}(V)= \text{End}_{fin}(V|V^{*}) \quad \text{and} \quad M(I,\mathbb{F}) \cong \text{End}_{\mathbb{F}}(V)=\text{End}(V|V^{*})$$ do not have anti-isomorphisms.  \end{proof}

\begin{proof}[Proof of Theorem $\ref{Th_IN_M_5_NEW}$]  The part (a) immediately follows from Theorem~III above applied to the algebra $A=M_{\infty} ( I,\mathbb{F})$ and  Theorem~\ref{Th_IN_M_3_NEW}(a).

\vspace{10pt}
(b) Let $ ^{*} $ be the transpose or the symplectic involution on the algebra $A=M_{\infty}  ( I,\mathbb{F} ).$ Then $K = K (A,^{*})=\mathfrak{o}_{\infty}(I,\mathbb{F})$ or $\mathfrak{sp}_{\infty}(I,\mathbb{F}).$ By  Theorem \ref{Th_IN_M_1(1)}(d), we have $K=[ K,K].$ By Theorem~IV above, an arbitrary automorphism $\varphi$ of the Lie algebra $K$ lifts to an automorphism of the algebra $A.$ By  Theorem \ref{Th_IN_M_3_NEW}$(a),$ there exists an element $x \in G (M_{rcf} ( I,\mathbb{F} ) )$ such that $\varphi ( a ) = x^{-1}ax$ for all elements $a \in K.$ Hence $$ ( x^{-1}ax)^{*}=-x^{*}a (x^{*})^{-1}=-x^{-1}ax,$$ which implies $$ ( xx^{*}) a  ( xx^{*})^{-1}=a.$$

We proved that the element $xx^{*}$ commutes with all elements from $K.$ Since the algebra $A$ is generated by the subspace $K$ it follows that the element $xx^{*}$ commutes with all elements from $M_{\infty} ( I,F )$ and, therefore, is a scalar matrix $xx^{*}=\alpha\cdot \text{Id},$ $0\neq \alpha \in \mathbb{F}.$ This completes the proof of the part (b).

\vspace{10pt}
The part (c) immediately follows from Theorem~III above,  Theorem~\ref{Th_IN_M_1(1)}, Theorem~\ref{Th_IN_M_3_NEW}(c), and  parts (d),(e) follow from Theorem~IV above,  Theorem~\ref{Th_IN_M_4_NEW}(b) and Theorem~\ref{Th_IN_M_3_NEW}(b),(c). This completes the proof of  Theorem \ref{Th_IN_M_5_NEW}.\end{proof}

\vspace{15pt}
\address{\small Faculty of Mechanics and Mathematics, Taras Shevchenko National University of Kyiv, Volodymyrska, 60, Kyiv 01033, Ukraine}

\email{\small \emph{Email address}: bezusch@univ.kiev.ua}

\end{document}